\theoremstyle{plain}
\newtheorem{theorem}{Theorem}[section]
\newtheorem{proposition}[theorem]{Proposition}
\newtheorem{corollary}[theorem]{Corollary}
\newtheorem{conjecture}[theorem]{Conjecture}
\newtheorem{lemma}[theorem]{Lemma}
\newtheorem*{thm*}{Theorem}
\newtheorem*{lemma*}{Lemma}
\newtheorem*{prop*}{Proposition}
\newtheorem*{cor*}{Corollary}
\newtheorem*{conj*}{Conjecture}
\newtheorem*{alg*}{Algorithm}
\theoremstyle{definition}
\newtheorem{definition}[theorem]{Definition}
\newtheorem{example}[theorem]{Example}
\newtheorem{remark}[theorem]{Remark}
\newcommand{\RR}{\mathbb{R}}
\newcommand{\cm}{\mathcal{M}}
\definecolor{darkgreen}{rgb}{0,0.4,0}
\definecolor{MyBlue}{rgb}{0,0.08,0.7} 
\definecolor{MyRed}{rgb}{0.85,0.08,0} 
\DeclareMathOperator{\pa}{pa}
\DeclareMathOperator{\an}{an}
\DeclareMathOperator{\ch}{ch}
\DeclareMathOperator{\rank}{rank}
\DeclareMathOperator{\neighbors}{ne}
\def\ci{\perp\kern-1.3ex\perp}
\def\nci{\not\kern-0.3ex\ci}
\definecolor{benpurple}{RGB}{180, 0, 240}
\renewcommand*\env@matrix[1][\arraystretch]{%
  \edef\arraystretch{#1}%
  \hskip -\arraycolsep
  \let\@ifnextchar\new@ifnextchar
  \array{*\c@MaxMatrixCols c}}
\title[Identifiability of linear SEMs using Algebraic Matroids]{Identifiability of Homoscedastic Linear Structural Equation Models using Algebraic Matroids}
\author{Mathias Drton, Benjamin Hollering, and Jun Wu}
\date{August 4, 2023}
\begin{document}

\begin{abstract}
We consider structural equation models (SEMs), in which every variable is a function of a subset of the other variables and a stochastic error. Each such SEM is naturally associated with a directed graph describing the relationships between variables. When the errors are homoscedastic, recent work has proposed methods for inferring the graph from observational data under the assumption that the graph is acyclic (i.e., the SEM is recursive). In this work, we study the setting of homoscedastic errors but allow the graph to be cyclic (i.e., the SEM to be non-recursive). Using an algebraic approach that compares matroids derived from the parameterizations of the models, we derive sufficient conditions for when two simple directed graphs generate different distributions generically. Based on these conditions, we exhibit subclasses of graphs that allow for directed cycles, yet are generically identifiable. We also conjecture a strengthening of our graphical criterion which can be used to distinguish many more non-complete graphs.  
\end{abstract}

\keywords{Algebraic matroids; Structural equation models; Directed graph; Identifiability; Homoscedastic errors}
\maketitle

\section{Introduction}
Graphical models provide a powerful tool for modeling stochastic dependence between random variables. A random vector $X=(X_1,\dots, X_p)$ is represented by a graph, in which the vertices encode random variables and the directed edges encode causal relationships. In this paper we study structural equation models where the relationships between the random variables are specified by a system of equations which comes from the underlying graph.

Structure learning from (observational) data is a fundamental problem in the area of graphical models. For this problem to be well-defined, structural identifiability results are needed. A well-known result is that directed acyclic graphical (DAG) models can be identified up to Markov equivalence class \cite{pearl:2009,spirtes:2000}, under the assumption of faithfulness and causal sufficiency. 
Many works have discussed the structural identifiability under stronger distributional assumptions, e.g., linear non-Gaussian acyclic models are identifiable \cite{Shimizu06alinear}. These identifiability results have been  extended to some functional model classes of nonlinear additive noise models in \cite{NIPS2008_f7664060,Peters11identifiabilityof}. Moreover, some recent works \cite{NIPS2015_fccb60fb,pmlr-v89-park19a} discuss structural identifiability in models where the conditional distribution of every node given its parents is discrete. Another type of distributional assumption is that the model is \emph{homoscedastic} which means that the stochastic error of each variable has the same variance. This assumption is reasonable and also natural when it comes to variables from similar domains. With equal error variance, the structural identifiability of linear DAG models has been proved in \cite{ideneqPeters,eqvar/biomet/asz049}.

In many applications, feedback and self-regulations among variables may exist and thus it is more suitable to apply cyclic models rather than the DAG models. However, learning directed cyclic graphs is also much more difficult because of the existence of feedback loops. The Markov equivalence class of directed cyclic graphs was first studied \cite{Richardson96Algorithm,Richardson96Equivalence} and the cyclic causal discovery algorithm was proposed for estimating the Markov equivalence class.

Recently weaker assumptions on sparsity are considered \cite{park2016identifiability}, in the sense of number of edges or number of d-separations. When it comes to recovering the exact graphical structure from observational data, additional distributional assumptions or special model restrictions are required. An example is that, \cite{NIPS2011_idenbivarite} proves the structural identifiability of bivariate Gaussian cyclic graphical models with additive Gaussian noise.

We address the problem of structural identifiability of homoscedastic linear structural equation models (SEM) with directed cyclic graphs 
through the lens of \emph{algebraic statistics} \cite{ASseth}. This means that instead of focusing on the statistical model, we study the naturally associated algebraic variety obtained by taking the Zariski closure of the model. Algebraic and combinatorial techniques have been very successful in proving identifiability results, particularly in phylogenetics where one can reduce the problem to a finite number of cases \cite{gross2018distinguishing, sullivant2012disentangling} or use the inherent structure of the model to certify that the parameters are identifiable \cite{allman2010identifiability}. Often, one can also use to algebraic structure to provide a graphical criterion for identifiability such as the well known half-trek criterion of \cite{foygel2012halftrek} for linear SEMs.  

In this paper, we focus on the algebraic matroid associated to homoscedastic linear SEMs which is a represented by the Jacobian of the parameterization of the model. This allows us to find sufficient graphical conditions for identifiability and avoids time-consuming Gr\"obner basis calculations for the vanishing ideals of the models. We use the structure of the algebraic matroid to obtain several graphical criteria for identifiability which can be easily checked. This approach for proving identifiability results was first developed by \cite{Seth19M} based on the characterization of algebraic matroids presented in \cite{rosen2014computing}.

The remainder of this paper is structured as follows. Section \ref{sec:pre} provides some background on linear structural equation models, generic identifiability, and algebraic matroids. In Section \ref{sec:jac} we describe the general structure of Jacobian matroid of a linear SEMs on a directed graphs. In Section \ref{sec:DistinguishingWithMatroids} we provide several sufficient graphical conditions for identifiability by showing that they correspond to the associated statistical models having different matroids. This allows us to exhibit some families of graphs which are generically identifiable. Finally, in Section \ref{sec:Conjectures} we discuss some conjectures and provide some evidence that our graphical criteria might be able to distinguish many more non-complete graphs.

\section{Preliminaries}
\label{sec:pre}
In this section we provide some background on linear structural equation models (SEMs) which will be our primary interest throughout this paper. We then provide some background on generic identifiability and matroids which we will be the main tool we use to prove identifiability results for linear SEMs. 

\subsection{Linear structural equation models}
Let $\varepsilon=(\varepsilon_i:i\in V)$ be a vector of random errors and $X=(X_i:i\in V)$ be a random vector satisfying the structural equation system:
$$
X=\Lambda^TX+\varepsilon,
$$
in which $\Lambda=(\lambda_{ij})\in\mathbb{R}^{V\times V}$ with the unknown coefficient $\lambda_{ij} (i\neq j)$ being the direct effect of $X_j$ on $X_i$. Suppose that $\varepsilon$ has postive definite covariance matrix $\Omega=(\omega_{ij})\in \mathbb{R}^{V\times V}$. Let $I$ be the identity matrix. When $I-\Lambda$ is invertible, the equation system has a unique solution $X=(I-\Lambda)^{-T}\varepsilon$, with covariance matrix 
$${\rm Var}[X]=\Sigma=(I-\Lambda)^{-T}\Omega(I-\Lambda)^{-1}.$$ 

The linear structural equation model can be naturally represented by a graph. For independent errors $\varepsilon$ (i.e. $\Omega$ is diagonal), the linear SEM is associated to a directed graph $G=(V,D)$, where $V$ is the set of nodes and $D\subseteq V\times V$ is the edge set. Every node in $V$ represents a random variable. Elements in $D$ are ordered pairs $(i,j),\ i\neq j$, also denoted by $i\rightarrow j$, encoding the causal relationships between random variables. 

In a directed graph $G=(V,D)$, if  $(i,j)\in D$, we say that $i$ is a \emph{parent} of $j$ and $j$ is a \emph{child} of $i$, denoted by $i\in \pa(j)$ and $j\in \ch(i)$. A node $i$ is a \emph{ancestor} of $j$, denoted by $i \in \an(j)$, if there exists a directed path from $i$ to $j$. If there exists an edge between $i$ and $j$, we say that $i$ and $j$ are \emph{adjacent}. A \emph{collider triple} in $G$ is a triple of nodes $(i,j,k)$ having substructure $i\rightarrow j \leftarrow k$. The pointed node $j$ is a \emph{collider}. When the nodes pointing into a collider are not adjacent, we say that the collider is \emph{unshielded} and the tail nodes are \emph{adjacent via collider}. The \emph{skeleton} of $G$ is the undirected graph obtained by replacing all edges with undirected edges. 

Throughout this paper we will assume that the graph is simple, and 
the errors $\varepsilon_i$ are independent Gaussians with equal variance $\omega_{ii}=\omega$. That is, the edge weights $\lambda_{ij}$ and $\lambda_{ji}$ cannot be both nonzero for $i\neq j$, and $\Omega$ is a multiple of identity matrix $I$. With these assumptions it is often easier to consider the precision matrix $K = \Sigma^{-1}$ rather than $\Sigma$ itself. In particular, $K$ is given by the equation
\[
K=\Sigma^{-1}=(I-\Lambda)(\omega I)^{-1}(I-\Lambda)^{T}:=s(I-\Lambda)(I-\Lambda)^{T},\ s=\frac{1}{\omega}
\]
and we denote the corresponding map $(\Lambda,s)\mapsto K$ by $\psi(\Lambda,s)$.

Let $\mathbb{R}^D$ be the set of real $V\times V$ matrices $\Lambda=(\lambda_{ij})$ with support in $D$, i.e.,
$$
\mathbb{R}^D:=\{\Lambda\in V\times V:\ \Lambda_{ij}=0 \text{ if } i\rightarrow j\notin D\}.
$$
We also define $\mathbb{R}^D_{\text{reg}}$ to be the subset of matrices $\Lambda\in\mathbb{R}^D$ for which $I-\Lambda$ is invertible, since this is required for the associated linear SEM to be well-defined. 
\begin{definition}
The linear Gaussian precision model given by directed graph $G=(V,D)$, is the family of all multivariate normal distribution on $\mathbb{R}^V$ which is parameterized by the map
$$
M_G=\left\{K: K=\psi_G(\Lambda,s),\ \Lambda\in \mathbb{R}^D_{\text{\rm reg}} \text{ \rm and } s\in \mathbb{R}^+\right\}.
$$
The precision matrix parameterization of the model is the map
\begin{align*}
\psi_G: \mathbb{R}^D\times \mathbb{R}^+ &\mapsto PD_V,\\
(\Lambda,s) &\mapsto s(I-\Lambda)(I-\Lambda)^T,
\end{align*}
in which $PD_V$ is the cone of positive definite symmetric $V\times V$ matrices.
\end{definition}

\begin{example}
\label{ex:linearCyclicSEM}
Consider the following 4-node directed graph and the linear structural equation model which is given recursively on the left. 
\vspace{-1em}
\begin{multicols}{2}
\begin{align*}
X_1 &= \varepsilon_1\\
X_2 &= \lambda_{12}X_1 + \lambda_{42}X_4 + \varepsilon_2\\
X_3 &= \lambda_{23}X_2 + \varepsilon_3\\
X_4 &= \lambda_{34}X_3 + \varepsilon_4
\end{align*}
\mbox{ }

\scalebox{0.8}{
\begin{minipage}[t]{\textwidth}
\begin{tikzpicture}
[main_node/.style={circle,fill=white,draw,minimum size=2 em,inner sep=2pt, line width=1.25pt]}]

\node[main_node, label=above:{$\omega$}] (1) at (0, 0) {$1$};
\node[main_node, label=above:{$\omega$}] (2) at (2.165, 1.25) {$2$};
\node[main_node, label=above:{$\omega$}] (3) at (4.33, 0) {$3$};
\node[main_node, label=below:{$\omega$}] (4) at (2.165, -1.25) {$4$};

\draw [->, line width=1pt] (1) -- node[left=1.5mm, above] {$\lambda_{12}$} (2);
\draw [->, line width=1pt] (2) -- node[right=1.5mm, above] {$\lambda_{23}$} (3);
\draw [->, line width=1pt] (3) -- node[right=1.5mm, below] {$\lambda_{34}$} (4);
\draw [->, line width=1pt] (4) -- node[right] {$\lambda_{42}$} (2);

\end{tikzpicture}
\end{minipage}
}
\end{multicols}
\vspace{-1em}
All errors $\varepsilon_i$ follows the same normal distribution $\mathcal{N}(0,\omega)$ so the associated parameters of the model are $(\Lambda, s)$ which have the form
$$
{\Lambda}=
\begin{pmatrix}
0 & \lambda_{12} & 0 & 0\\
0 & 0 & \lambda_{23} & 0\\
0 & 0 & 0 & \lambda_{34}\\
0 & \lambda_{42} & 0 & 0
\end{pmatrix}
, \qquad
s=\frac{1}{\omega}. 
$$
Then the model $M_G$ associated to the graph $G$ above consists of precision matrices $K$ of the form
$$
K=s(I-\Lambda)(I-\Lambda)^T=
\begin{pmatrix}
s(1+\lambda_{12}^2) & -s\lambda_{12} & 0 & s\lambda_{12}\lambda_{42}\\
-s\lambda_{12} & s(1+\lambda_{23}^2) & -s\lambda_{23} & -s\lambda_{42}\\
0 & -s\lambda_{23} & s(1+\lambda_{34}^2) & -s\lambda_{34}\\
s\lambda_{12}\lambda_{42} & -s\lambda_{42} & -s\lambda_{34} & s(1+\lambda_{42}^2)
\end{pmatrix}.
$$
\end{example}

\subsection{Generic identifiability}
Suppose we have a family of models $\{M_{{i}}\}^k_{i=1}$ corresponding to graphs $G_i$ all of which have the same node set $V$ and thus all models sit in the same cone $PD_V$. 
If $M_{{i_1}}\cap M_{{i_2}}=\emptyset$ for each distinct pair $(i_1,i_2)$, 
we say that the discrete parameter $i$, or the graph collection $\{G_i\}_{i=1}^k$ is \emph{globally identifiable}. This requirement is typically too restrictive and cannot be fulfilled in many cases. The following weaker notion of identifiability is often used instead. 

\begin{definition}
Let $\{M_{{i}}\}^k_{i=1}$ be a finite set of algebraic models which sit in the same ambient space, the discrete parameter $i$ is \emph{generically identifiable} if for each pair of $(i_1,i_2)$, 
$$
\dim(M_{{i_1}}\cap M_{{i_2}})<\min\left(\dim(M_{{i_1}}),\dim(M_{{i_2}})\right).
$$
\end{definition}

Generic identifiability is sometimes called \emph{model distinguishability} \cite[Section 16]{ASseth} and is particularly nice since algebraic and geometric tools may be used to study these types of identifiability questions. Geometrically, this condition ensures that the intersection of any two models in the family is a Lebesgue measure zero subset of both models. However, this definition of generic identifiability is not appropriate for our setting since it implies that two graphs $G_1$ and $G_2$ with corresponding models $M_1$ and $M_2$ such that $M_1 \subseteq M_2$
are not generically identifiable, which is not desired. We instead refer to the notion of \emph{quasi equivalence} in \cite{CDE19,NEURIPS2020spar}, which requires the intersection of two models has nonzero measure under the Lebesgue measure defined over the \textbf{union} of both models. This leads to the following definition of generic identifiability for directed graph structures. 
\begin{definition}
Let $\{M_{{i}}\}^k_{i=1}$ be a finite set of algebraic models which sit in the cone $PD_V$, the parameter $i$ (or the model family) is \emph{generically identifiable} if for each pair of $(i_1,i_2)$, 
$$
\dim(M_{{i_1}}\cap M_{{i_2}})<\max\left(\dim(M_{{i_1}}),\dim(M_{{i_2}})\right).
$$
\end{definition}
This definition immediately implies that two models with different dimensions are generically identifiable. This means we can focus on the identifiability of models of the same dimension. When $\dim(M_1)=\dim(M_2)$, the $\min$ and $\max$ functions are actually the same. Two irreducible models of the same dimension must either be equal or have lower dimensional intersections.


\subsection{Proving Identifiability Results with Matroids}\label{sec2.3}
Since the linear SEMs we study are algebraic, the \emph{vanishing ideal} $\mathcal{I}(M)$ of the model $M$ which is
\[
\mathcal{I}(M)=\{f\in\mathbb{R}[x]:\ f(x)=0 \text{ for all } x\in M \} 
\]
can be used to answer many questions concerning the model. The following well-known proposition illustrates how vanishing ideals may be used to certify generic identifiability. 
\begin{proposition}\label{polyid}
\cite[Proposition 16.1.12]{ASseth} 
Let $M_1$ and $M_2$ be two irreducible algebraic models (e.g. parameterized models) which sit inside the same ambient space. If there exist polynomials $f_1$ and $f_2$ such that
$$
f_1\in\mathcal{I}(M_1)\ \backslash\ \mathcal{I}(M_2) \text{ and } f_2\in\mathcal{I}(M_2)\ \backslash\ \mathcal{I}(M_1) 
$$
then $\dim(M_1\cap M_2) <\min(\dim(M_1),\dim(M_2))$.
\end{proposition}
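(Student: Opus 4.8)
The plan is to reduce the statement to the elementary fact that a proper Zariski-closed subset of an irreducible variety has strictly smaller dimension. Throughout, $\dim M_i$ and $\mathcal{I}(M_i)$ are to be read through the Zariski closure $\overline{M_i}$, which by hypothesis is an irreducible variety; recall that $\mathcal{I}(M_i) = \mathcal{I}(\overline{M_i})$ and that $\dim(M_1\cap M_2) = \dim\overline{M_1\cap M_2}$.

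First I would use one half of the hypothesis, say $f_2 \in \mathcal{I}(M_2)$: then $f_2$ vanishes on all of $M_2$, hence on $M_1\cap M_2$. Writing $Z_2 = \{x : f_2(x) = 0\}$ for the (Zariski-closed) zero set of $f_2$, this gives $M_1\cap M_2 \subseteq \overline{M_1}\cap Z_2$, and therefore $\overline{M_1\cap M_2} \subseteq \overline{M_1}\cap Z_2$, the right-hand side being closed.

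Next I would invoke the other half, $f_2 \notin \mathcal{I}(M_1) = \mathcal{I}(\overline{M_1})$: this says precisely that $f_2$ does not vanish identically on $\overline{M_1}$, i.e. $\overline{M_1}\not\subseteq Z_2$. Hence $\overline{M_1}\cap Z_2$ is a \emph{proper} closed subset of the irreducible variety $\overline{M_1}$, so $\dim(\overline{M_1}\cap Z_2) < \dim\overline{M_1} = \dim M_1$. Combining this with the previous step and monotonicity of dimension under inclusion,
$$\dim(M_1\cap M_2) = \dim\overline{M_1\cap M_2} \;\le\; \dim(\overline{M_1}\cap Z_2) \;<\; \dim M_1.$$
The symmetric argument, using $f_1 \in \mathcal{I}(M_1)\setminus\mathcal{I}(M_2)$ with the roles of $M_1$ and $M_2$ exchanged, gives $\dim(M_1\cap M_2) < \dim M_2$; taking the smaller of the two bounds yields $\dim(M_1\cap M_2) < \min(\dim M_1, \dim M_2)$.

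There is essentially no serious obstacle here; the only point that deserves care is the bookkeeping between the semialgebraic models $M_i\subseteq PD_V$ and their Zariski closures, so that the fact about proper subvarieties of an irreducible variety is genuinely applicable — and this is exactly where the irreducibility assumption on the models enters.
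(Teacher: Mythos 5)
Your argument is correct and is exactly the standard proof of this fact: the paper itself gives no proof, deferring to \cite[Proposition 16.1.12]{ASseth}, and your reduction to ``a proper Zariski-closed subset of an irreducible variety has strictly smaller dimension,'' applied once with $f_2$ to bound against $\dim M_1$ and once with $f_1$ to bound against $\dim M_2$, is precisely the argument behind that cited result. The bookkeeping you flag (passing to Zariski closures, $\mathcal{I}(M_i)=\mathcal{I}(\overline{M_i})$, and $\dim(M_1\cap M_2)=\dim\overline{M_1\cap M_2}$) is handled correctly, so there is nothing to add.
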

While this proposition can be a strong tool and has been successfully used to prove many identifiability results \cite{allman2010identifiability, gross2021distinguishing}, it typically requires many expensive Gr\"obner basis computations which become untenable as the size of the models grow. In this paper we will instead follow the approach developed in \cite{Seth19M}
which uses \emph{algebraic matroids} as an alternative to vanishing ideals to prove identifiability results. We summarize the key points here. For more details on matroids we refer the reader to \cite{oxleyMatroidTheory92}.

\begin{definition}
A \emph{matroid} $\mathcal{M}=(E,\mathcal{I})$ is a pair where $E$ is a finite set and $\mathcal{I}\subseteq 2^E$ satisfies
\begin{enumerate}[{\rm (1)}]
\item $\emptyset\in \mathcal{I}$
\item If $I'\subseteq I \in \mathcal{I}$, then $I'\in\mathcal{I}$
\item If $I_1, I_2\in\mathcal{I}$ and $|I_2|>|I_1|$, then there exists $e\in I_2\backslash I_1$ such that $I_1\cup e\in\mathcal{I}$
\end{enumerate}
\end{definition}

\begin{example}[Linear Matroid]
Let $E = \{1, 2, 3, 4\}$ be the ground set and let $A \in \RR^{3 \times 4}$ be the matrix
\[
A =
\begin{pmatrix}
1 & 1 & 0 & -2 \\
2 & 0 & 1 & -4 \\
1 & 1 & 0 & -2
\end{pmatrix}.
\]
Then we define a matroid $\mathcal{M}$ with independent sets $I \subseteq \{1,2,3,4\}$ such that the submatrix $A_I$ consisting of only the columns indexed  by $i \in I$ has full rank. In other words, the independent sets of $\mathcal{M}$ correspond to subsets of the columns of $A$ that are linearly independent over $\RR$.

For example the set $\{2, 3\}$ is clearly an independent set of $\cm$ since the second and third column of $A$ are linearly independent however the set $\{1,2,3\}$ is not an independent set. 
\end{example}

More generally, any linear space or matrix $A \in k^{m \times n}$ over a field $k$ defines a matroid on the set $\{1, \ldots n \}$ as is illustrated in the above example. The following definition demonstrates that this idea is not limited to linear spaces. 

\begin{definition}
Let $W\subset k^n$ be an irreducible variety over the field $k$ and for $S\subseteq [n]$ let $\pi_S:k^n\rightarrow k^{|S|}$ be the projection onto the coordinates in $S$. Let $\overline{\pi_S(W)}$ be the Zariski closure of the projection of $W$. Then the pair $([n], \mathcal{I}_W)$ defines a matroid where
$$
\mathcal{I}_W=\{S\subseteq[n]:\overline{\pi_S(W)}=k^{|S|}\},
$$
which is called the \emph{coordinate projection matroid} of $W$ and denoted by $\mathcal{M}(W)$.
\end{definition}

\begin{proposition}\label{coorid}
\cite[Proposition 3.1]{Seth19M}
Let $M_1$ and $M_2$ be two irreducible algebraic models sit in the same ambient space. Without loss of generality assume that $\dim(M_1)\geq\dim(M_2)$. If there exists a subset $S$ of the coordinates such that
$$
S\in \mathcal{M}(\overline{M_2})\ \backslash\ \mathcal{M}(\overline{M_1}),
$$
then $\dim(M_1\cap M_2)<\min(\dim(M_1),\dim(M_2))$.
\end{proposition}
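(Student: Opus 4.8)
The plan is to reduce the statement to a dimension count on coordinate projections and then invoke the fact that the coordinate projection matroid records exactly which projections are dominant. Concretely, suppose $S \subseteq [n]$ satisfies $S \in \mathcal{M}(\overline{M_2})$ but $S \notin \mathcal{M}(\overline{M_1})$. Unwinding the definition of the coordinate projection matroid, this says $\overline{\pi_S(\overline{M_2})} = k^{|S|}$, so $\dim \pi_S(\overline{M_2}) = |S|$, whereas $\overline{\pi_S(\overline{M_1})} \subsetneq k^{|S|}$, so $\dim \pi_S(\overline{M_1}) < |S|$. Since projections never increase dimension, $\dim \pi_S(\overline{M_1 \cap M_2}) \le \dim \pi_S(\overline{M_1}) < |S|$.

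Next I would compare this with $M_2$. The key point is that $\dim(M_1 \cap M_2)$ is what we want to bound, and the obstruction to $S$ being ``large enough to detect'' the intersection is precisely that $\pi_S$ restricted to $\overline{M_2}$ is dominant. First I would argue that if we had $\dim(M_1 \cap M_2) = \dim(M_2)$ (which, since $M_1 \cap M_2 \subseteq M_2$ and $M_2$ is irreducible, would force $\overline{M_1 \cap M_2} = \overline{M_2}$), then $\pi_S(\overline{M_1 \cap M_2})$ and $\pi_S(\overline{M_2})$ would have the same Zariski closure, namely all of $k^{|S|}$, giving $\dim \pi_S(\overline{M_1 \cap M_2}) = |S|$. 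But this contradicts the inequality $\dim \pi_S(\overline{M_1 \cap M_2}) < |S|$ from the previous step. Hence $\dim(M_1 \cap M_2) < \dim(M_2) = \min(\dim M_1, \dim M_2)$, using the standing assumption $\dim M_1 \ge \dim M_2$.

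For the write-up I would state the two standard facts I am leaning on: (i) for a morphism of varieties, the dimension of the image is at most the dimension of the source, and $\overline{\pi_S(W)} = k^{|S|}$ iff $\dim \overline{\pi_S(W)} = |S|$; and (ii) for an irreducible variety $Y$ and a closed subvariety $Z \subseteq Y$, $\dim Z = \dim Y$ forces $Z = Y$. Both are routine, and with them in hand the argument is just the chain of (in)equalities above. The only point requiring a little care is making sure the irreducibility hypotheses on $M_1$ and $M_2$ are actually used correctly: irreducibility of $M_2$ is what lets us pass from ``the intersection has full dimension in $M_2$'' to ``the intersection is Zariski dense in $M_2$,'' which is the crux. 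I expect this to be the main (and only real) obstacle, and it is minor. I would also remark that this is essentially Proposition \ref{polyid} rephrased: a coordinate $S \notin \mathcal{M}(\overline{M_1})$ corresponds to a polynomial relation among the coordinates indexed by $S$ that vanishes on $M_1$ but not on $M_2$, so the two propositions are logically parallel, with the matroid version being computationally cheaper.
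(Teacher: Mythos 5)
Your argument is correct: the paper states this proposition as a citation to \cite[Proposition 3.1]{Seth19M} and gives no proof of its own, and your dimension count (dominance of $\pi_S$ on $\overline{M_2}$, non-dominance on $\overline{M_1}\supseteq \overline{M_1\cap M_2}$, plus the fact that a closed subset of an irreducible variety of full dimension must be the whole variety) is exactly the standard argument behind the cited result. The only loose point is your closing remark that this is Proposition \ref{polyid} ``rephrased'': the matroid hypothesis only produces a polynomial in $\mathcal{I}(M_1)\setminus\mathcal{I}(M_2)$ (one direction, not both), which suffices here only because the standing assumption $\dim M_1\geq\dim M_2$ makes $\dim M_2$ the minimum — but this is a side comment and does not affect the proof.
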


In our settings, the model $M$ (and variety $\overline{M}$) is parameterized and thus irreducible. Under this condition, there exists another equivalent representation of the coordinate projection matroid $\mathcal{M}(W)$. Since we focus on the case of $\dim(M_1)=\dim(M_2)$, the roles of $M_1$ and $M_2$ are indeed symmetric. Either $S\in \mathcal{M}(\overline{M_2})\ \backslash\ \mathcal{M}(\overline{M_1})$ or $S\in \mathcal{M}(\overline{M_1})\ \backslash\ \mathcal{M}(\overline{M_2})$ implies generic identifiability.



\begin{proposition}\label{jacmat}
\cite{rosen2014computing}
Suppose that $\phi(\theta_1,...,\theta_d)=(\phi_1(\theta),...,\phi_n(\theta))$ parameterizes $W$ (i.e. $W=\overline{\phi(k^d)}$). Let
$$
J(\phi)=\left(\frac{\partial\phi_j}{\partial\theta_i}\right),\ 1\leq i\leq d,\ 1\leq j\leq n
$$
be the transpose of the Jacobian matrix of $\phi$. Then the matroid defined by the columns of the matrix $J(\phi)$ using linear independence over the fraction field $Frac(k[\theta])=k(\theta)$ gives the same matroid as the coordinate projection matroid $\mathcal{M}(\overline{\phi(k^d)})$. We call it the \textbf{Jacobian matroid} of $W$.
\end{proposition}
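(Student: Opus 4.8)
The plan is to show that the two matroids in the statement, both on ground set $[n]$, have the same independent sets; write $\mathcal{J}$ for the column matroid of $J(\phi)$ over $k(\theta)$ (independent sets $=$ column subsets linearly independent over $k(\theta)$) and recall that the independent sets of $\mathcal{M}(W)$ are $\mathcal{I}_W=\{S:\overline{\pi_S(W)}=k^{|S|}\}$. Fix $S\subseteq[n]$ and set $\phi_S:=\pi_S\circ\phi\colon k^d\to k^{|S|}$. Two bookkeeping observations: first, since $W=\overline{\phi(k^d)}$ and $\overline{\pi_S(\overline{Z})}=\overline{\pi_S(Z)}$ for any subset $Z$ by continuity of $\pi_S$, we have $\overline{\pi_S(W)}=\overline{\pi_S(\phi(k^d))}=\overline{\phi_S(k^d)}$; second, the transpose Jacobian of $\phi_S$ is exactly the submatrix $J_S(\phi)$ of $J(\phi)$ formed by the columns indexed by $S$, because the $j$-th column of $J(\phi)$ is the gradient of $\phi_j$. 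The tools below are usually stated over an algebraically closed base field, but ranks of matrices and dimensions of varieties are insensitive to field extension and the models of interest live over $\RR$, so nothing is lost in assuming $k$ algebraically closed of characteristic zero.

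The crux is the chain of equalities
$$
\rank_{k(\theta)} J_S(\phi)\;=\;\operatorname{trdeg}_{k} k\bigl(\phi_j : j\in S\bigr)\;=\;\dim\overline{\phi_S(k^d)}.
$$
The second equality is the definition of dimension: $k(\phi_j:j\in S)$ is the function field of the irreducible variety $\overline{\phi_S(k^d)}$, whose coordinate ring is the subring of $k[\theta]$ generated by $\{\phi_j:j\in S\}$. The first equality is the classical \emph{Jacobian criterion for algebraic independence} in characteristic zero: the rank over $k(\theta)$ of $(\partial\phi_j/\partial\theta_i)_{i\in[d],\,j\in S}$ equals the transcendence degree over $k$ of the subfield generated by the $\phi_j$, so in particular it equals $|S|$ precisely when the $\phi_j$, $j\in S$, are algebraically independent over $k$. (Alternatively one can obtain $\rank_{k(\theta)}J_S(\phi)=\dim\overline{\phi_S(k^d)}$ directly from generic smoothness of the dominant morphism $k^d\to\overline{\phi_S(k^d)}$ in characteristic zero, the generic rank of the differential of a morphism out of a smooth variety being the dimension of the closure of its image.)

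Granting the displayed chain, the equivalence of independent sets is immediate. The set $\overline{\pi_S(W)}=\overline{\phi_S(k^d)}$ is an irreducible closed subvariety of $k^{|S|}$, hence equals $k^{|S|}$ if and only if $\dim\overline{\pi_S(W)}=|S|$; by the chain this holds if and only if $\rank_{k(\theta)}J_S(\phi)=|S|$, that is, if and only if the $|S|$ columns of $J(\phi)$ indexed by $S$ are linearly independent over $k(\theta)$. Therefore $S\in\mathcal{I}_W$ if and only if $S$ is independent in $\mathcal{J}$, so $\mathcal{M}(W)=\mathcal{J}$.

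The only genuinely nontrivial ingredient is the Jacobian criterion, and inside it the implication that algebraic independence of the $\phi_j$ (equivalently, dominance of $\phi_S$) forces the Jacobian to attain full column rank at a generic $\theta$; this is where characteristic zero is indispensable, entering either through separability of the relevant field extension or, in the geometric phrasing, through generic smoothness. The remaining steps — identifying $J_S(\phi)$ with the transpose Jacobian of $\phi_S$, reading transcendence degree as dimension, and the fact that a full-dimensional irreducible closed subvariety of affine space is the whole space — are routine, so I expect those to cause no difficulty.
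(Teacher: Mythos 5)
The paper does not prove this proposition; it is quoted verbatim from \cite{rosen2014computing} as a known result, so there is no in-paper argument to compare against. Your proof is the standard (and essentially the original) one: reduce independence of $S$ in the coordinate projection matroid to the condition $\dim\overline{\pi_S(W)}=|S|$, identify that dimension with $\operatorname{trdeg}_k k(\phi_j:j\in S)$ via the function field of the image, and invoke the characteristic-zero Jacobian criterion to equate that transcendence degree with $\rank_{k(\theta)}J_S(\phi)$. The argument is correct, correctly isolates the Jacobian criterion (equivalently, generic smoothness) as the one nontrivial ingredient and the place where characteristic zero is used, and the bookkeeping steps (closure commuting with projection up to closure, $J_S(\phi)$ being the transposed Jacobian of $\pi_S\circ\phi$, and a full-dimensional irreducible closed subvariety of affine space being the whole space) are all handled properly. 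One small remark: the Jacobian criterion holds over any field of characteristic zero, so the detour through an algebraically closed extension is not actually needed, though it is harmless as stated.
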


The statement of Proposition \ref{coorid} is the same as that Proposition 3.1 in \cite{Seth19M} except we have replaced the probability simplex with any ambient space. This proposition requires a stronger condition than Proposition \ref{polyid} but is much easier to check with the help of Proposition \ref{jacmat} since checking whether a set $S$ is in independent in the matroid $\mathcal{M}(\overline{M_i})$ amounts to computing the rank of the submatrix $J(\psi_i)_S$. This proposition will be our main tool to prove identifiability throughout the rest of this paper.

\section{Jacobian Structure of Gaussian Structural Equation Models} 
\label{sec:jac}
In this section we provide a combinatorial characterization of the entries of the Jacobian matrix of linear Gaussian precision SEMs. We then show that after appropriate row transformations, the entries of the Jacobain are all either constant or a single variable. We end this section with a few necessary graphical conditions which are required for two graphical models to have the same matroid. 

Throughout this paper we use the term ``Jacobian'' to refer to the transpose of the usual Jacobian matrix. The (transposed) Jacobian matrix, denoted by $J(\psi_G)$, is of size $(|D|+1)\times \frac{|V||V+1|}{2}$.  Each column of $J(\psi_G)$ corresponds to one entry $K_{ij}\ (i\leq j)$ of the precision matrix and each row corresponds to one edge weight $\lambda_{kl}$ or the inverse of common error variance $s$. For simplicity we will write $J$ for the Jacobian $J(\psi_G)$, if there is no ambiguity of indexing. When there are several Jacobian matrices corresponding to different graphs, we distinguish them by superscripts. The following lemma provides an explicit formula for the entries of $J$. 

\begin{lemma}\label{J}
Let $G = (V, D)$ be a directed graph. Then the entries of $J = J(\psi_G)$ are given by:
\begin{enumerate}[(1)]
\item For $i\in V$ and $(k,l)\in D$,
$$
J_{\lambda_{kl},K_{ii}}=
\begin{cases}
2s\lambda_{il}, &\quad k=i,\\
0, &\quad \text{else}.
\end{cases}
$$
\item
For $i,j\in V, i\neq j$ and $(k,l)\in D$,
$$
J_{\lambda_{kl},K_{ij}}=
\begin{cases}
-s, &\quad \{k,l\}=\{i,j\},\\
s\lambda_{jl},&\quad k=i \text{ and } (j,l)\in D,\\
s\lambda_{il},&\quad k=j \text{ and } (i,l)\in D,\\
0,&\quad \text{else}.
\end{cases}
$$
\item The partial derivatives w.r.t. the inverse error variance $s$ are
\begin{align*}
J_{s,K_{ii}}&=\left(1+\sum_{l \in {\ch}(i)} \lambda_{il}^2\right),\\
J_{s,K_{ij}}&=\left(-\lambda_{ij}1_{\{(i,j)\in D\}}-\lambda_{ji}1_{\{(j,i)\in D\}}+\sum_{l \in { \ch}(i)\cap{\ch}(j)} \lambda_{il}\lambda_{jl}\right).
\end{align*}
\end{enumerate}
\end{lemma}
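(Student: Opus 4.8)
The plan is to compute the partial derivatives of the matrix-valued map $\psi_G(\Lambda, s) = s(I-\Lambda)(I-\Lambda)^T$ entrywise, and then extract the $(i,j)$-entry. First I would set $N = I - \Lambda$, so that $K = sNN^T$ and $K_{ij} = s \sum_{l \in V} N_{il} N_{jl}$. Note that $N_{il} = 1_{\{i=l\}} - \lambda_{il}$, where by convention $\lambda_{il} = 0$ whenever $(i,l) \notin D$; in particular $\lambda_{ii} = 0$ so that $N_{ii} = 1$. The derivative $\partial \lambda_{kl}$ acts on $N$ by $\partial N_{ab}/\partial \lambda_{kl} = -1_{\{(a,b) = (k,l)\}}$. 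So differentiating $K_{ij}$ with respect to $\lambda_{kl}$ gives, by the product rule,
\[
\frac{\partial K_{ij}}{\partial \lambda_{kl}} = s \sum_{m \in V} \left( \frac{\partial N_{im}}{\partial \lambda_{kl}} N_{jm} + N_{im} \frac{\partial N_{jm}}{\partial \lambda_{kl}} \right) = -s\, 1_{\{i=k\}} N_{jl} - s\, 1_{\{j=k\}} N_{il}.
\]

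From this single identity all three cases in part (1) and part (2) follow by bookkeeping. For the diagonal entries $K_{ii}$ (part (1)), set $j = i$: the formula becomes $-2s\,1_{\{i=k\}} N_{il} = -2s\,1_{\{i=k\}}(1_{\{i=l\}} - \lambda_{il})$. Since $(k,l) \in D$ forces $k \neq l$, the term $1_{\{i=l\}}$ vanishes when $i = k$, leaving $2s\lambda_{il}$ when $k=i$ and $0$ otherwise, as claimed. For the off-diagonal entries $K_{ij}$ with $i \neq j$ (part (2)), expand $N_{jl} = 1_{\{j=l\}} - \lambda_{jl}$ and $N_{il} = 1_{\{i=l\}} - \lambda_{il}$ in the two-term expression $-s\,1_{\{i=k\}}N_{jl} - s\,1_{\{j=k\}}N_{il}$. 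One must check the cases: if $\{k,l\} = \{i,j\}$, exactly one of the indicator-products survives and contributes $-s$ (here one uses that $i \neq j$, so $k$ cannot equal both, and that the $-\lambda$ terms are absent because $(k,l)$ being the edge $i\to j$ or $j\to i$ means $l \neq i$ in the first case and $l \neq j$ in the second, etc.); if $k = i$ and $l \neq j$, only the first term survives as $s\lambda_{jl}$, which is nonzero precisely when $(j,l) \in D$; symmetrically for $k = j$; and otherwise everything is $0$. This is the routine but slightly fiddly step where care with the simultaneous vanishing of indicators and the convention $\lambda_{ab} = 0$ off $D$ matters.

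For part (3), differentiate $K_{ij} = s\sum_l N_{il}N_{jl}$ with respect to $s$, which simply strips the $s$: $\partial K_{ij}/\partial s = \sum_{l \in V} N_{il}N_{jl}$. For $j = i$ this is $\sum_l N_{il}^2 = N_{ii}^2 + \sum_{l \neq i} \lambda_{il}^2 = 1 + \sum_{l \in \ch(i)} \lambda_{il}^2$, using $N_{ii} = 1$ and that $\lambda_{il} \neq 0$ only for $l \in \ch(i)$. For $i \neq j$, $\sum_l N_{il}N_{jl} = N_{ii}N_{ji} + N_{ij}N_{jj} + \sum_{l \neq i,j} N_{il}N_{jl} = -\lambda_{ji} - \lambda_{ij} + \sum_{l \in \ch(i)\cap\ch(j)} \lambda_{il}\lambda_{jl}$; here $-\lambda_{ji}$ is really $-\lambda_{ji}1_{\{(j,i)\in D\}}$ and $-\lambda_{ij}$ is $-\lambda_{ij}1_{\{(i,j)\in D\}}$ by the off-$D$ convention, matching the stated formula. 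The main obstacle, such as it is, is purely organizational: keeping the index conventions ($\lambda_{ab}=0$ off $D$, hence $N_{aa}=1$ and $\lambda_{aa}=0$) consistent while resolving which indicator terms collide, so that the compact two-term derivative correctly unfolds into the four-case description in part (2). There is no substantive difficulty beyond this careful casework.
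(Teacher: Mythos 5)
Your proposal is correct: the paper states Lemma \ref{J} without proof, treating it as a routine computation, and your direct entrywise differentiation of $K_{ij}=s\sum_{m}N_{im}N_{jm}$ with $N=I-\Lambda$ is exactly the intended argument. The identity $\partial K_{ij}/\partial\lambda_{kl}=-s\,1_{\{i=k\}}N_{jl}-s\,1_{\{j=k\}}N_{il}$ together with the convention $\lambda_{ab}=0$ off $D$ (so $N_{aa}=1$ and $l\neq k$ for edges) correctly unfolds into all the stated cases, and the $s$-derivative computation for part (3) is likewise right.
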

Observe that this lemma clearly indicates the nonzero pattern of the $J(\psi_G)$. In particular, the columns of $K_{ii}$ have nonzero entries in the rows corresponding to outgoing edges of $i$ and the row for $s$. In the columns $K_{ij},\ i\neq j$, the entry in the row $\lambda_{ij}$ (or $\lambda_{ji}$) is nonzero if $(i,j)\in D$ (or $(j,i)\in D$); other nonzero entries are in the row pairs with those edges pointing to a common child of $i$ and $j$. The following example illustrates this lemma. 

\begin{example}\label{ex_jac}
Consider the graph $G=(V,D)$ pictured in Figure \ref{fig:diamond}, where $V=\{1,2,3,4\}$ and $D=\{(1,2),(2,4),(1,3),(3,4)\}$. The Jacobian $J(\psi)$ is given by
$$\begin{blockarray}{ccccccccccc}
K_{11} & K_{22} & K_{33} & K_{44} & K_{12} & K_{23} & K_{34} & K_{13} & K_{24} & K_{14} \\
\begin{block}{(cccccccccc)c}
  2s\lambda_{12} & 0 & 0 & 0    & -s & 0 & 0 & 0 & 0 & 0 & \lambda_{12}\\
  2s\lambda_{13} & 0 & 0 & 0    & 0 & 0 & 0 & -s & 0 & 0 & \lambda_{13}\\
  0 & 2s\lambda_{24} & 0 & 0    & 0 & s\lambda_{34} & 0 & 0 & -s & 0 & \lambda_{24}\\
  0 & 0 & 2s\lambda_{34} & 0    & 0 & s\lambda_{24} & -s & 0 & 0 & 0 & \lambda_{34}\\
  1+\lambda_{12}^2+\lambda_{13}^2 & 1+\lambda_{24}^2 & 1+\lambda_{34}^2 & 1 & -\lambda_{12} & \lambda_{24}\lambda_{34} & -\lambda_{34} & -\lambda_{13} & -\lambda_{24} & 0 & s\\
\end{block}
\end{blockarray}.
$$
\end{example}

\begin{lemma}
\label{lemma:simplifiedSRowJac}
Let $G = (V, D)$ be a graph and $J$ be the corresponding Jacobian. Let $R_{\lambda_{ij}}$ be the row of $J$ corresponding to the egde $\lambda_{ij} \in D$ and let $R_s$ be the row corresponding to $s$. Then after performing the sequence of row operation $R_s \to R_s - \frac{\lambda_{ij}}{2s} R_{\lambda_{ij}}$ and $R_s -> 2 R_s$, the entries of row $R_s$ are given by
\[
J_{s, K_{i,j}} = 
\begin{cases}
2, & \mathrm{if}~ i = j,\\
-\lambda_{ij}, & \mathrm{if}~ (i, j) \in D,\\
0, &\mbox{ } \mathrm{otherwise}.
\end{cases}
\]
\end{lemma}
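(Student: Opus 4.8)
The plan is to directly track what the row $R_s$ looks like after the prescribed operations, using the explicit formula for the original $s$-row given in Lemma \ref{J}(3). The key observation is that the row operation $R_s \to R_s - \frac{\lambda_{ij}}{2s} R_{\lambda_{ij}}$ is performed simultaneously over all edges $(i,j) \in D$; equivalently, one subtracts $\sum_{(i,j)\in D} \frac{\lambda_{ij}}{2s} R_{\lambda_{ij}}$ from $R_s$ and then multiplies by $2$. So for each column $K_{ab}$ (with $a \le b$) I would compute
\[
J^{\mathrm{new}}_{s,K_{ab}} = 2\left( J_{s,K_{ab}} - \sum_{(i,j)\in D} \frac{\lambda_{ij}}{2s}\, J_{\lambda_{ij},K_{ab}} \right),
\]
and check that this equals the claimed value in each of the three cases.

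First I would handle the diagonal columns $K_{aa}$. By Lemma \ref{J}(1), $J_{\lambda_{ij},K_{aa}}$ is nonzero only when $i = a$, in which case it equals $2s\lambda_{aj}$. Hence the correction term is $\sum_{j \in \ch(a)} \frac{\lambda_{aj}}{2s}\cdot 2s\lambda_{aj} = \sum_{j\in\ch(a)} \lambda_{aj}^2$, which exactly cancels the sum appearing in $J_{s,K_{aa}} = 1 + \sum_{j\in\ch(a)}\lambda_{aj}^2$, leaving $1$; multiplying by $2$ gives $2$, as claimed. Next, for off-diagonal columns $K_{ab}$ with $a \ne b$: by Lemma \ref{J}(2), the rows $\lambda_{ij}$ with nonzero entry in this column are exactly (i) the edge between $a$ and $b$ itself (if present), contributing $-s$, and (ii) edges $a \to l$ with $(b,l)\in D$, contributing $s\lambda_{bl}$, and symmetrically edges $b \to l$ with $(a,l)\in D$, contributing $s\lambda_{al}$. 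Multiplying each by the corresponding $\frac{\lambda_{ij}}{2s}$ and summing, the type-(ii) terms produce $\sum_{l\in \ch(a)\cap\ch(b)} \lambda_{al}\lambda_{bl}$ (each common child $l$ contributing once from the $a\to l$ row and once from the $b\to l$ row, but the factor $\tfrac12$ and the symmetry need to be accounted for carefully — this is the one place a sign/counting slip is easy), and the type-(i) term produces $-\tfrac12(\lambda_{ab}1_{\{(a,b)\in D\}} + \lambda_{ba}1_{\{(b,a)\in D\}})$ times $-s/s$... I would organize this so that the $\ch(a)\cap\ch(b)$ sum exactly cancels the one in $J_{s,K_{ab}}$ from Lemma \ref{J}(3), leaving $-\tfrac12(\lambda_{ab}1_{\{(a,b)\in D\}}+\lambda_{ba}1_{\{(b,a)\in D\}})$, and then multiplication by $2$ recovers $-\lambda_{ab}1_{\{(a,b)\in D\}} - \lambda_{ba}1_{\{(b,a)\in D\}}$. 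Since the graph is simple, at most one of $\lambda_{ab},\lambda_{ba}$ is nonzero, so this is $-\lambda_{ab}$ when $(a,b)\in D$ and $0$ otherwise.

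The main obstacle is bookkeeping rather than conceptual: one must be careful that the single row operation as stated really means subtracting a multiple of $R_{\lambda_{ij}}$ for every edge at once (the lemma's ``sequence'' notation is slightly informal), and one must correctly match the factor-of-$2$ discrepancies between the $-s$ entries of Lemma \ref{J}(2) and the $2s$ entries of Lemma \ref{J}(1), which is precisely why the final rescaling is by $2$ and not by some other constant. I would present the computation as a short case analysis on $K_{ab}$ exactly as above, noting after the $\ch(a)\cap\ch(b)$ cancellation that simplicity of $G$ collapses the two indicator terms into the single expression $-\lambda_{ij}$.
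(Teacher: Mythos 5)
Your proposal is correct and follows essentially the same route as the paper: both proofs directly expand the combined operation $R_s \mapsto 2\bigl(R_s - \sum_{(k,l)\in D}\tfrac{\lambda_{kl}}{2s}R_{\lambda_{kl}}\bigr)$ column by column using the explicit entries from Lemma \ref{J}, and observe that the quadratic terms $\sum_l \lambda_{il}\lambda_{jl}$ (resp.\ $\sum_l\lambda_{il}^2$) cancel, leaving $-\lambda_{ij}/2$ (resp.\ $1$) before the final doubling. The paper writes out only the case $(i,j)\in D$ and declares the others similar, whereas you sketch all three; the bookkeeping you flag as delicate (each common child contributing twice with a factor $\tfrac12$) does work out exactly as you claim.
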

\begin{proof}
The three cases are all similar and straightforward so we will only show the second case since it is the most interesting. So suppose that $(i, j) \in D$. By Lemma \ref{J}, the entry $J_{s, K_{ij}}$ before this sequence of row operations is
\[
J_{s, K_{ij}} = -\lambda_{ij} + \sum_{l \in \ch(i) \cap \ch(j) }\lambda_{il}\lambda_{jl}.
\]
After applying this sequence of row operations means we must add the quantity 
\begin{align*}
\sum_{(k,l) \in D} -\frac{\lambda_{kl}}{-2s} J_{\lambda_{kl}, K_{ij}} &= -\frac{\lambda_{ij}}{-2s}(-s) + \sum_{l \in \ch(i) \cap \ch(j)} -\frac{\lambda_{il}}{-2s} s\lambda_{jl} + \sum_{l \in \ch(i) \cap \ch(j)} -\frac{\lambda_{jl}}{-2s} s\lambda_{il}\\
&= \frac{\lambda_{ij}}{2} - \sum_{l \in \ch(i) \cap \ch(j)}\lambda_{il}\lambda_{jl}
\end{align*}
to $J_{s, K_{ij}}$. So we see that after adding this quantity the entry $J_{s, K_{ij}}$ will be $J_{s, K_{ij}} =  \frac{-\lambda_{ij}}{2}$ which completes the proof since our last operation multiplies this row by 2 which will yield the desired entry. 
\end{proof}

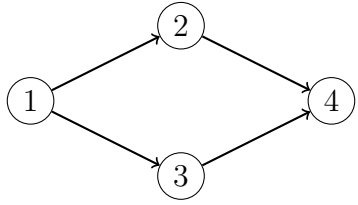
\begin{figure}
\centering
\begin{tikzpicture}
[main_node/.style={circle,fill=white,draw,minimum size=1.5 em,inner sep=2pt]}]

\node[main_node] (1) at (0, 0) {$1$};
\node[main_node] (2) at (2, 1) {$2$};
\node[main_node] (3) at (2, -1) {$3$};
\node[main_node] (4) at (4, 0) {$4$};

\draw [->, thick] (1) to (2);
\draw [->, thick] (1) to (3);
\draw [->, thick] (2) to (4);
\draw [->, thick] (3) to (4);

\end{tikzpicture}
\caption{The diamond graph $G$ which is used in Example \ref{ex_jac}.}
\label{fig:diamond}
\end{figure}

Every matroid is completely determined by its \emph{bases} which are the maximal independent sets with respect to inclusion. Each basis has the same cardinality which is called the \emph{rank} of the matroid and in this case it is simply the rank of the Jacobian evaluated at a generic point, or equivalently, the dimension of the model. 

\begin{theorem}\label{dim}
Let $G=(V,D)$ be a simple directed graph and $\mathcal{M}(\psi_G)$ be the Jacobian matroid of $\psi_G$. Then the Jacobian $J$ has full rank, thus any basis of the $\mathcal{M}(\psi_G)$ is of size $|D|+1$.
\end{theorem}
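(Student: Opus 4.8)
The plan is to prove directly that $J$ has full row rank, namely rank $|D|+1$, by exhibiting an explicit square submatrix of that size whose determinant is a nonzero element of the fraction field $\mathbb{R}(\Lambda,s)$. Since $J$ has exactly $|D|+1$ rows --- one row $R_{\lambda_{kl}}$ for each edge $(k,l)\in D$, and the row $R_s$ --- this is the same as showing that some $|D|+1$ of its columns are linearly independent over $\mathbb{R}(\Lambda,s)$. Once this is established, the rank of the Jacobian matroid $\mathcal{M}(\psi_G)$, which equals $\rank J$ by Proposition \ref{jacmat}, is $|D|+1$, so every basis of $\mathcal{M}(\psi_G)$ has that cardinality.

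The columns I would single out are the one indexed by $K_{11}$ together with the one indexed by $K_{kl}$ for each edge $(k,l)\in D$ (one may replace the node $1$ by any fixed node; we may assume $V\neq\emptyset$, the empty case being vacuous). Because $G$ is simple, distinct edges $(k,l)\in D$ give distinct unordered pairs $\{k,l\}$, hence distinct off-diagonal columns of $J$, and none of these equals the diagonal column $K_{11}$; so these really are $|D|+1$ distinct columns. Rather than expanding the determinant of the resulting submatrix in general, I would argue that this determinant, viewed as a polynomial in the entries of $\Lambda$ and in $s$, is not identically zero by evaluating it at the single point $\Lambda=0$.

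At $\Lambda=0$, Lemma \ref{J} gives a very simple picture. In the column $K_{kl}$ (with $(k,l)\in D$) the only possibly nonzero entry is $J_{\lambda_{kl},K_{kl}}=-s$: the entries $s\lambda_{jl}$ and $s\lambda_{il}$ in the other $\lambda$-rows vanish, and the $s$-row entry $-\lambda_{kl}+\sum_{m\in\ch(k)\cap\ch(l)}\lambda_{km}\lambda_{lm}$ vanishes as well. In the column $K_{11}$ the only nonzero entry is $J_{s,K_{11}}=1+\sum_{m\in\ch(1)}\lambda_{1m}^2=1$, since the entries $2s\lambda_{1m}$ in the $\lambda$-rows vanish. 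Ordering the rows as $\big(R_{\lambda_{k_1l_1}},\dots,R_{\lambda_{k_{|D|}l_{|D|}}},R_s\big)$ and the columns as $\big(K_{k_1l_1},\dots,K_{k_{|D|}l_{|D|}},K_{11}\big)$, the specialized submatrix is diagonal with diagonal entries $(-s,\dots,-s,1)$, so its determinant is $(-s)^{|D|}$, a nonzero polynomial in $s$. Hence the determinant of the corresponding submatrix of $J$ is a nonzero element of $\mathbb{R}(\Lambda,s)$, the chosen $|D|+1$ columns are independent, and $J$ has full row rank $|D|+1$, which is also $\dim M_G$ as recalled just before the statement.

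I do not anticipate a genuine obstacle here, as this is just a one-point specialization. The only care needed is index bookkeeping in Lemma \ref{J}: one must use simplicity of $G$ to rule out a contribution of $\lambda_{lk}$ in the $\lambda_{kl}$-row of column $K_{kl}$, and to guarantee that the chosen columns are pairwise distinct. Alternatively, one could first apply the row operations of Lemma \ref{lemma:simplifiedSRowJac}, after which the $s$-row entry in column $K_{kl}$ is manifestly $-\lambda_{kl}$, hence $0$ at $\Lambda=0$; but this simplification is not needed for the argument.
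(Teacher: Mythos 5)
Your proposal is correct and follows essentially the same route as the paper: both select the columns $K_{kl}$ for the edges together with one diagonal column, specialize $\Lambda=0$ (the paper also sets $s=1$) to obtain a diagonal/identity-type submatrix, and conclude full rank since specialization can only decrease rank. Your bookkeeping about simplicity guaranteeing distinct columns is a harmless extra precaution not spelled out in the paper.
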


\begin{proof}
By Lemma \ref{J}, we know that each edge $(i,j)$ in $G$ leads to a $-s$ term in column $K_{ij}$. Now consider the $(|D|+1)\times(|D|+1)$ submatrix $J_S$ with columns corresponding to the set $S = \{K_{ij} ~|~ (i,j)\ {\rm or}\ (j,i) \in D\} \cup \{K_{ii}\}$ for any choice of $i \in V$. Then plugging in $\lambda_{ij} = 0$ and $s = 1$ in $J_S$ yields a constant multiple of the identity matrix up to row permutations, hence it has rank $|D|+1$. Since the rank of the Jacobian can only decrease when plugging in values for the parameters, it holds that $J_S$ has full rank over the fraction field $k(\lambda, s)$.  
\end{proof}

\begin{remark}
This theorem shows that the model of a simple directed graph is of expected dimension. However, a similar result does not hold for non-simple directed graphs, even if the number of edges is smaller than that of the complete graph.
\end{remark}

Since our main goal in this paper is to prove identifiability results, we are mainly interested in when two graphs $G_1$ and $G_2$ have different or the same Jacobian matroids. We end this section with several lemmas which give necessary conditions for when two graphs may yield the same matroid. 

\begin{lemma}\label{adj}
Let $G_1=(V,D_1)$, $G_2=(V,D_2)$ be two directed graphs with the same Jacobian matroid $\mathcal{M}$. If two node $i$ and $j$ are adjacent or have common children in one graph, then they must be adjacent or have common children in the other graph.
\end{lemma}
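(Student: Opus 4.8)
The plan is to characterize, in purely graphical terms, exactly when a single column of the Jacobian $J(\psi_G)$ is the zero vector, and then to note that this is a matroid invariant. Fix a pair $i \neq j$ and read off the column indexed by $K_{ij}$ from Lemma \ref{J}. In the $\lambda_{kl}$-rows the entry is nonzero only in the case $\{k,l\}=\{i,j\}$ (which requires an edge between $i$ and $j$), or the cases $k=i$ with $(j,l)\in D$ and $k=j$ with $(i,l)\in D$ (which require a node $l$ that is a child of both $i$ and $j$); in those cases the entry is $-s$ or $s\lambda_{jl}$ or $s\lambda_{il}$, each of which is a nonzero element of the fraction field $k(\lambda,s)$. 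By Lemma \ref{J} the entry in the $s$-row equals $-\lambda_{ij}1_{\{(i,j)\in D\}}-\lambda_{ji}1_{\{(j,i)\in D\}}+\sum_{l\in\ch(i)\cap\ch(j)}\lambda_{il}\lambda_{jl}$, which is identically zero precisely when $i$ and $j$ are non-adjacent with no common child. Combining these observations, the column $K_{ij}$ of $J(\psi_G)$ is the zero vector if and only if $i$ and $j$ are neither adjacent nor share a common child in $G$.

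Next I would invoke the matroid side. For a matroid represented by a matrix over a field — here the Jacobian matroid $\mathcal{M}(\psi_G)$ over $k(\lambda,s)$, which by Proposition \ref{jacmat} agrees with the coordinate projection matroid $\mathcal{M}(\overline{M_G})$ — a singleton $\{K_{ij}\}$ is an independent set if and only if the corresponding column is nonzero. Hence, by the previous paragraph, $\{K_{ij}\}\in\mathcal{I}(\mathcal{M}(\psi_G))$ if and only if $i$ and $j$ are adjacent or have a common child in $G$. Since $G_1$ and $G_2$ share the node set $V$, the ground sets of $\mathcal{M}(\psi_{G_1})$ and $\mathcal{M}(\psi_{G_2})$ are literally the same set $\{K_{ij}: i\le j\}$, and by hypothesis the two matroids are equal. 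Therefore $\{K_{ij}\}$ is independent in one matroid if and only if it is independent in the other, which translates exactly into the assertion that $i$ and $j$ are adjacent or have a common child in $G_1$ if and only if the same holds in $G_2$.

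I do not expect a genuine obstacle here: the statement is essentially a direct corollary of the sparsity pattern recorded in Lemma \ref{J}. The only point needing a moment of care is making sure that the candidate nonzero entries in the $\lambda$-rows are actually nonzero over $k(\lambda,s)$ — they are single monomials in $s$ and the $\lambda$'s, so no cancellation is possible — which is what guarantees that ``$i,j$ adjacent or sharing a common child'' really forces the column to be nonzero rather than merely to have entries that might cancel. One should also state explicitly that a single column spans an independent set iff it is nonzero, so that the graphical condition matches membership of the singleton in $\mathcal{I}$.
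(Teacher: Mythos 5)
Your argument is correct and is essentially the paper's own proof: both reduce the claim to the observation that the singleton $\{K_{ij}\}$ is independent in the Jacobian matroid iff the column $K_{ij}$ is nonzero, which by Lemma \ref{J} happens iff $i$ and $j$ are adjacent or share a common child, and then invoke equality of the two matroids. Your version just spells out the nonvanishing of the individual entries over $k(\lambda,s)$ in more detail than the paper does.
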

\begin{proof}
The column $K_{ij}$ is an independent set in the matroid $\cm$, if there exists a nonzero entry in this column which by Lemma \ref{J} happens if and only if $i$ and $j$ are adjacent or have common children. Since the two graphs have the same matroid, the result immediately follows. 
\end{proof}

\begin{lemma}\label{sinks_and_inedges}
Let $G_1=(V,D_1)$, $G_2=(V,D_2)$ be two directed graphs with the same Jacobian matroid $\mathcal{M}$. If the node $i$ is a sink node in both graphs, then $\pa_{G_1}(i) = \pa_{G_2}(i)$. 
\end{lemma}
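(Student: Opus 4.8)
The plan is to obtain this as an immediate consequence of Lemma~\ref{adj}. The one structural fact to keep in mind is that a sink node has no outgoing edges, hence no children, so every edge incident to a sink $i$ in a graph $G_m$ is an incoming edge; in particular the set of neighbors of $i$ in $G_m$ coincides with $\pa_{G_m}(i)$.

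With this in hand, I would argue as follows. Fix $j \in \pa_{G_1}(i)$, so that $i$ and $j$ are adjacent in $G_1$. Because $G_1$ and $G_2$ carry the same Jacobian matroid $\mathcal{M}$, Lemma~\ref{adj} forces $i$ and $j$ to be adjacent or to have a common child in $G_2$. The ``common child'' possibility is excluded: $i$ is a sink in $G_2$, so it has no children there, and a common child of $i$ and $j$ would in particular be a child of $i$. Therefore $i$ and $j$ are adjacent in $G_2$, and since $i$ has no outgoing edges in $G_2$ this edge must be $j \to i$; that is, $j \in \pa_{G_2}(i)$. Hence $\pa_{G_1}(i) \subseteq \pa_{G_2}(i)$, and exchanging the roles of $G_1$ and $G_2$ --- which is legitimate since the hypotheses are symmetric in the two graphs --- yields the reverse inclusion.

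There is essentially no obstacle here; the only subtlety is that the sink hypothesis has to be invoked on whichever graph plays the role of ``target'' (here $G_2$) in order to kill the ``common child'' branch of Lemma~\ref{adj}. Alternatively, one could bypass Lemma~\ref{adj} and read the conclusion straight off Lemma~\ref{J}: when $i$ is a sink in $G_m$, the column of $J(\psi_{G_m})$ indexed by $K_{ij}$ is the zero column unless $(j,i) \in D_m$, in which case it has the nonzero entry $-s$ in the row for $\lambda_{ji}$. Thus $\{K_{ij}\}$ is independent in $\mathcal{M}$ precisely when $j \in \pa_{G_m}(i)$, and equality of the two matroids gives $\pa_{G_1}(i) = \pa_{G_2}(i)$.
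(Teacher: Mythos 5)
Your proposal is correct and follows essentially the same route as the paper: apply Lemma~\ref{adj} to a parent $k$ of $i$ in $G_1$, use the sink hypothesis in $G_2$ to rule out the common-child alternative and to force the resulting edge to point into $i$, then symmetrize. Your added remark that the claim can also be read directly off Lemma~\ref{J} (the column $K_{ij}$ is nonzero for a sink $i$ exactly when $(j,i)$ is an edge) is a valid, slightly more self-contained variant, but the substance matches the paper's argument.
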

\begin{proof}
Suppose that $k \in \pa_{G_1}(i)$, then by Lemma \ref{adj}, it must be that $k$ and $i$ are also adjacent or have common children in $G_2$ as well since they are adjacent in $G_1$ but $i$ is a sink node in $G_2$ so it cannot have any children. Thus it must be that $k \to i \in D_2$. 
\end{proof}

\section{Graphical Conditions for Distinguishing SEMs with Matroids}
\label{sec:DistinguishingWithMatroids}
In this section we provide several sufficient conditions for determining when two graphs have different matroids. We first show that two non-complete graphs with different out-degree sequences or two graphs with no unshielded colliders must have different matroids. We then provide a generalization of these two theorems which uses parentally closed sets to capture these conditions.

\begin{lemma}\label{rankupbd}
Let $G=(V,D)$ be a directed graph whose corresponding model has dimension $|D|+1$ and $J$ be the associated Jacobian. If $G$ is not complete, then for every node $i$ and any subset of the columns $S$ of size $|D|+1$ such that 
$S \cap \{K_{i1},K_{i2},...,K_{i(i-1)},K_{ii},K_{i(i+1)},...\}=\emptyset$, it holds that
$\rank(J_S) \leq |D| - |\ch(i)| + 1$.
\end{lemma}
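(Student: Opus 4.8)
The plan is to exhibit $|\ch(i)|$ rows of $J$ that become identically zero once every column of the form $K_{ij}$ is removed, and then bound $\rank(J_S)$ by the number of surviving rows. Recall from Section~\ref{sec:jac} that $J$ has exactly $|D|+1$ rows: one row $R_{\lambda_{kl}}$ for each edge $(k,l)\in D$ together with the row $R_s$. The rows I would single out are $R_{\lambda_{il}}$ for $l\in\ch(i)$; there are $|\ch(i)|$ of them, and they sit among the $|D|$ edge rows.

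First I would read off the support of the row $R_{\lambda_{il}}$ directly from Lemma~\ref{J}, fixing the edge-row index to have first coordinate $i$. In part~(1) the only diagonal column meeting this row is $K_{ii}$. In part~(2), a column $K_{ab}$ with $a\neq b$ meets $R_{\lambda_{il}}$ only when $\{i,l\}=\{a,b\}$, or when $a=i$ and $(b,l)\in D$, or when $b=i$ and $(a,l)\in D$ — and in each of these situations $i\in\{a,b\}$. The entries with respect to $s$ (part~(3)) are irrelevant here. Hence every nonzero entry of $R_{\lambda_{il}}$ lies in a column $K_{ab}$ with $i\in\{a,b\}$, i.e.\ in the set $\{K_{i1},\dots,K_{i(i-1)},K_{ii},K_{i(i+1)},\dots\}$.

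By hypothesis $S$ is disjoint from exactly that set of columns, so in the submatrix $J_S$ each of the rows $R_{\lambda_{il}}$, $l\in\ch(i)$, is the zero row. Since $J_S$ still has $|D|+1$ rows and $|\ch(i)|$ of them vanish,
\[
\rank(J_S)\le (|D|+1)-|\ch(i)| = |D|-|\ch(i)|+1,
\]
as claimed. The hypothesis that $G$ is not complete is needed only to guarantee that a column set $S$ of size $|D|+1$ avoiding those $|V|$ columns can exist at all: deleting the $|V|$ columns $K_{ij}$ leaves $\binom{|V|}{2}$ columns, and a non-complete simple graph has $|D|\le\binom{|V|}{2}-1$, so $|D|+1\le\binom{|V|}{2}$; the assumption $\dim M_G=|D|+1$ (Theorem~\ref{dim}) is what fixes the row count of $J$.

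\textbf{Where the difficulty lies.} There is essentially no obstacle beyond the bookkeeping in the previous paragraph. The only point that requires a moment's care is the third sub-case of Lemma~\ref{J}(2), ``$k=j$ and $(i,l)\in D$'', which superficially looks as though it could place a nonzero entry of $R_{\lambda_{il}}$ into a column $K_{ab}$ with $i\notin\{a,b\}$; but that sub-case applies to rows whose edge-index has first coordinate equal to the \emph{second} index of the precision entry, which for our row $R_{\lambda_{il}}$ again forces $i\in\{a,b\}$. Once this is checked, the bound is immediate.
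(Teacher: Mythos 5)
Your proof is correct and follows essentially the same route as the paper: both arguments observe, via Lemma~\ref{J}, that every nonzero entry of a row $R_{\lambda_{il}}$ with $l\in\ch(i)$ lies in a column $K_{ab}$ with $i\in\{a,b\}$, so these $|\ch(i)|$ rows vanish in $J_S$ and the rank bound follows. Your added remarks on why non-completeness guarantees such an $S$ exists and on the case analysis in Lemma~\ref{J}(2) are correct but not needed beyond what the paper states.
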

\begin{proof}
Observe that for any choice of $k, l \neq i$, Lemma \ref{J} implies that $J_{\lambda_{ij}, K_{kl}} = 0$. Since every column $K_{kl} \in S$ satisfies this condition, it holds that all of the rows of $J_S$ which correspond to the edges $\lambda_{ij}$ are zero. This means there are at least $|\ch(i)|$ zero rows in $J_S$ and hence $\rank(J_S) \leq |D| - |\ch(i)| +1$. 
\end{proof}

\begin{lemma}\label{ranklowerbd}
Let $G=(V,D)$ be a simple directed graph and $J$ be the associated Jacobian. If $G$ is not complete, then for every node $i$, there exists a set of columns $S$ of size $|D|+1$ such that 
$S \cap \{K_{i1},K_{i2},\ldots,K_{i(i-1)},K_{ii},K_{i(i+1)},\ldots \}=\emptyset$ and $\rank(J_S) \geq |D|-|\ch(i)|+1$.
\end{lemma}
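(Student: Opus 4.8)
The plan is to construct, for each fixed node $i$, an explicit set $S$ of $|D|+1$ columns, none of which involves the index $i$, together with a square submatrix of $J_S$ of size $r:=|D|-|\ch(i)|+1$ that is nonsingular; nonsingularity will be certified by specializing the edge weights and $s$ to convenient numerical values.

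First I would dispatch two bookkeeping points. The number of columns of $J$ not involving $i$ is $\binom{|V|+1}{2}-|V|=\binom{|V|}{2}$, and since $G$ is not complete we have $|D|\le\binom{|V|}{2}-1$, hence $|D|+1\le\binom{|V|}{2}$; so there are enough columns avoiding $i$ to build $S$, and it suffices to exhibit $r$ columns avoiding $i$ that are linearly independent over $k(\lambda,s)$ and then pad the set to size $|D|+1$ with further columns avoiding $i$. Also, by Lemma~\ref{J} every nonzero entry of a row $\lambda_{il}$ with $l\in\ch(i)$ sits in some column $K_{ab}$ with $i\in\{a,b\}$; hence on the columns avoiding $i$ only the rows $\lambda_{kl}$ with $k\ne i$ and the row $s$ can be nonzero, and there are precisely $r$ of these, so $r$ is the correct target (matching the bound of Lemma~\ref{rankupbd}).

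For the construction I take the columns $K_{kl}$ for each edge $(k,l)\in D$ not incident to $i$ — write $D_0$ for the set of such edges — together with $K_{kk}$ for each $k\in\pa(i)$, and one further column: a diagonal column $K_{jj}$ with $j\in V\setminus(\pa(i)\cup\{i\})$ when such a $j$ exists, and otherwise the off-diagonal column $K_{ab}$ for a non-adjacent pair $a,b\in V\setminus\{i\}$ (when $\pa(i)=V\setminus\{i\}$, simplicity forces $\ch(i)=\emptyset$ and non-completeness of $G$ forces such a pair to exist). Since $G$ is simple these columns are pairwise distinct, and they number $|D_0|+|\pa(i)|+1=r$. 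The rows are $\lambda_{kl}$ for $(k,l)\in D_0$, then $\lambda_{ki}$ for $k\in\pa(i)$, then $s$. Specializing $s=1$, $\lambda_{ki}=1$ for all $k\in\pa(i)$, and every other edge weight to $0$, one reads off the entries from Lemma~\ref{J}: the block in rows $\{\lambda_{kl}\}_{(k,l)\in D_0}$ and columns $\{K_{kl}\}_{(k,l)\in D_0}$ becomes $-I$, while the other two blocks in these rows vanish, so the $r\times r$ matrix has the shape $\left(\begin{smallmatrix}-I&0\\ *&M\end{smallmatrix}\right)$; a short computation gives $M=\left(\begin{smallmatrix}2I&0\\ (2,\dots,2)&1\end{smallmatrix}\right)$ in the first case and $M=\left(\begin{smallmatrix}2I&e_a+e_b\\ (2,\dots,2)&1\end{smallmatrix}\right)$ (with $e_a+e_b$ the vector having $1$ in positions $a$ and $b$) in the second, with determinants $2^{|\pa(i)|}$ and $-2^{|\pa(i)|}$. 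Since rank cannot increase upon specialization, these $r$ columns are independent over $k(\lambda,s)$; enlarging to $|D|+1$ columns avoiding $i$ completes the argument.

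The main obstacle is not any single entry computation but arranging the column choice so that the specialized matrix is genuinely block lower triangular with controlled diagonal blocks; in particular, the degenerate case $\pa(i)=V\setminus\{i\}$, where no free diagonal column exists, must be handled by a non-adjacent pair column $K_{ab}$, which forces one to check — using simplicity and non-completeness of $G$ — both that such a pair exists and that the resulting rank-one perturbation keeps $M$ nonsingular. A secondary point needing care is the bookkeeping: that the chosen columns are distinct, that there are exactly $r$ of them, and that $\binom{|V|}{2}\ge|D|+1$ so that $S$ really can be completed to size $|D|+1$ within the columns avoiding $i$.
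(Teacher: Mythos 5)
Your proposal is correct and follows essentially the same strategy as the paper's proof: delete the columns $K_{ki}$, add diagonal columns for the neighbours of $i$ plus one extra column (a non-adjacent pair $K_{ab}$ in the degenerate case $\pa(i)=V\setminus\{i\}$), specialize all edge weights to zero except those on edges into $i$, and read off a block-triangular matrix of rank $|D|-|\ch(i)|+1$. The only differences are cosmetic streamlinings — you exhibit an $r\times r$ nonsingular minor and pad rather than building the full $(|D|+1)$-column set at once, set $\lambda_{ki}=1$ instead of $\varepsilon$, and certify the degenerate case by a direct determinant computation instead of the paper's column operations.
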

\begin{proof}
We start by constructing a potential set $S$. First we define the set
\[
S_E = \{K_{kl} ~|~ (k, l) \in D ~\mathrm{or}~ (l, k) \in D\}.
\]
and let $S_- \subseteq S_E$ be the subset consisting of columns of the form $K_{ki}$. So $S_-$ corresponds to the adjacent pairs $\{i,k\}$ and $|S_-|= \text{deg}(i)$ which will be removed from $S_E$ to obtain the required set $S$. We now split into two cases based on $\deg(i)$.

First consider the case where $\text{deg}(i)<|V|-1$. 
There is a node $j_0$ not adjacent to $i$. Let $S_+$ be a subset of size $(\text{deg}(i)+1)\leq |V|-1$ from $\{K_{11},K_{22},\ldots, K_{i-1,i-1},K_{i+1,i+1},\ldots\}$ containing all $K_{jj}$ such that $(j,i)\in D$ or $(i,j)\in D$ and $K_{j_0 j_0}$. The set $S=(S_E\backslash S_-)\cup S_+$ is of size $|D|+1$. Then the submatrix $J_S$ evaluated at $s=1, \lambda_{ji}=\varepsilon>0$ and all other edge weights $0$ is
\begin{align}\label{ranklb_mat}
&\begin{blockarray}{ccccccccc}
K_{j_{0}j_{0}} &\cdots & K_{j_{1}j_{1}} & \cdots & K_{j_{q}j_{q}} & K_{k_{1}l_{1}} & \cdots & K_{k_m l_m}\\
\begin{block}{(cccccccc)c}
  \times & \cdots & \times & \cdots & \times & -s &\cdots & \times & \lambda_{k_1 l_1} \\
  \vdots & \vdots & \vdots & \vdots & \vdots & \vdots &\ddots &\vdots & \vdots \\
  \times & \cdots & \times & \cdots & \times &\times & \cdots &-s & \lambda_{k_m l_m} \\
  \times & \cdots & 2s\lambda_{j_{1}i} & \cdots & 0 & \times & \cdots &\times & \lambda_{j_1 i}\\
  \vdots & \vdots & \vdots & \ddots & \vdots & \vdots & \vdots & \vdots & \vdots \\
  \times & \cdots & 0 & \cdots & 2s\lambda_{j_{q}i} & \times &\cdots &\times & \lambda_{j_q i} \\
  \times & \cdots & \times & \cdots & \times &\times & \cdots & \times & \lambda_{i n_1}\\
  \vdots & \vdots & \vdots & \vdots & \vdots &\vdots & \vdots & \vdots & \vdots\\
  \times & \cdots & \times & \cdots & \times &\times & \cdots & \times  & \lambda_{i n_p}\\
  1+\sum_{\alpha_0}\lambda_{j_{0}\alpha_{0}}^2 & \cdots & 1+\sum_{\alpha_1}\lambda_{j_{1}\alpha_{1}}^2 & \cdots & 1+\sum_{\alpha_q}\lambda_{j_{q}\alpha_{q}}^2 & \times &\cdots & \times & s\\
\end{block}
\end{blockarray}
\nonumber\\
=\quad &\begin{blockarray}{ccccccccc}
K_{j_{0}j_{0}} &\cdots & K_{j_{1}j_{1}} & \cdots & K_{j_{q}j_{q}} & K_{k_{1}l_{1}} & \cdots & K_{k_{m}l_{m}}\\
\begin{block}{(cccccccc)c}
  0 & \cdots & 0 & \cdots & 0 & -1 &\cdots & O(\varepsilon) & \lambda_{k_1 l_1}\\
  \vdots & \vdots & \vdots & \vdots & \vdots & \vdots &\ddots &\vdots & \vdots \\
  0 & \cdots & 0 & \cdots & 0 & O(\varepsilon) & \cdots &-1 & \lambda_{k_m l_n}\\
  0 & \cdots & 2\varepsilon & \cdots & 0 & \times & \cdots &\times & \lambda_{j_1 i}\\
  \vdots & \vdots & \vdots & \ddots & \vdots & \vdots & \vdots & \vdots & \vdots\\
  0 & \cdots & 0 & \cdots & 2\varepsilon & \times &\cdots &\times & \lambda_{j_q i}\\
  0 & \cdots & \times & \cdots & \times &\times & \cdots & \times & \lambda_{i n_1}\\
  \vdots & \vdots & \vdots & \vdots & \vdots &\vdots & \vdots & \vdots & \vdots\\
  0 & \cdots & \times & \cdots & \times &\times & \cdots & \times & \lambda_{i n_p} \\
  1 & \cdots & 2 & \cdots & 2 & 0 & \cdots & 0 & s\\
\end{block}
\end{blockarray}
\end{align}
Observe that after a reshuffling of the rows, the submatrix of $J_S$ corresponding to the rows labelled by 
$\{\lambda_{k_1 l_1}, \ldots, \lambda_{k_m l_m}, \lambda_{j_1 i}, \ldots, \lambda_{j_q i}, s\}$ is a block upper triangular matrix. The diagonal blocks correspond to the rows whose edges labels do not involve $i$ (those labelled by the $\lambda_{k_\alpha, l_\alpha}$), the parents of $i$, and $s$. Note that the block corresponding to the edges which do not involve $i$ is strictly diagonally dominant for a small enough choice of $\varepsilon$ and thus is full rank while the other two blocks are scalar multiples of the identity matrix. Thus we see that this submatrix of $J_S$ is full rank and so we have that 
$\rank(J_S) \geq (|D| -\deg(i)) + |\pa(i)| + 1 = |D| - |\ch(i)| + 1$.

Now suppose that $\text{deg}(i)=|V|-1$ and $i$ is not a sink node so there exists some $j_0 \in \ch(i)$. Let $S_+ = \{K_{11},K_{22},...,K_{(i-1)(i-1)},K_{(i+1)(i+1)},...\}$ and consider the set $S = (S_E \setminus S_-) \cup S_+$. Note that $S$ only contains $|D|$ columns now. However the submatrix $J_S$ has the exact same form as that pictured in Equation \ref{ranklb_mat} thus we again get that $\rank(J_S) \geq |D| - |\ch(i)|+1$.  Adding any additional column $K_{x y}$ such that $x,y \neq i$ to $S$ gives a set of the desired form and doing so can only increase the rank of $J_S$ so this case is complete. 

Lastly suppose that $\deg(i)=|V|-1$ and $i$ is a sink node in $G$. Once again take $S = (S_E \setminus S_-)\cup S_+$ where $S_+ = \{K_{11},K_{22},...,K_{(i-1)(i-1)},K_{(i+1)(i+1)},...\} \cup \{K_{x y}\}$ such that $x, y \neq i$ and $x, y$ are not adjacent in $G$. Since $G$ is not complete, such a pair of vertices is guaranteed to exist in $G$. 
Then evaluating the submatrix $J_S$  at $s=1, \lambda_{ji}=\varepsilon>0$ and all other edge weights $0$ yields

\begin{align*}
&
\begin{scriptsize}
\begin{blockarray}{ccccccccccc}
K_{xy} &\cdots & K_{xx} & K_{yy} & K_{j_{1}j_{1}} & \cdots & K_{j_{q}j_{q}} & K_{k_{1}l_{1}} & \cdots & K_{k_{m}l_{m}}\\
\begin{block}{(cccccccccc)c}
  \times & \cdots & \times & \times & \times & \cdots & \times & -s &\cdots & \times & \lambda_{k_1 l_1}\\
  \vdots & \vdots & \vdots & \vdots & \vdots & \vdots & \vdots & \vdots &\ddots &\vdots & \vdots\\
  \times & \cdots & \times & \times & \times & \cdots & \times &\times & \cdots &-s & \lambda_{k_m l_m} \\
  s \lambda_{ni} & \cdots & 2s\lambda_{mi} & 0 & 0 & \cdots & 0 & \times & \cdots &\times & \lambda_{x i}\\
  s\lambda_{mi} & \cdots & 0 & 2s\lambda_{ni} & 0 & \cdots & 0 & \times & \cdots &\times & \lambda_{y i} \\
  \times & \cdots & 0 & 0 & 2s\lambda_{j_{1}i} & \cdots & 0 & \times & \cdots &\times & \lambda_{j_1 i}\\
  \vdots & \vdots & \vdots & \vdots & \vdots & \ddots & \vdots & \vdots & \vdots & \vdots & \vdots\\
  \times & \cdots & 0 & 0 & 0 & \cdots & 2s\lambda_{j_{q}i} & \times &\cdots &\times & \lambda_{j_q i}\\
  \times & \cdots & \times & \times & \times & \cdots & \times &\times & \cdots & \times & \lambda_{i n_1}\\
  \vdots & \vdots & \vdots & \vdots & \vdots & \vdots & \vdots &\vdots & \vdots & \vdots & \vdots \\
  \times & \cdots & \times & \times & \times & \cdots & \times &\times & \cdots & \times & \lambda_{i n_p}\\
  \sum_{\alpha_x,\alpha_y}\lambda_{x \alpha_{x}}\lambda_{y \alpha_y} & \cdots & 1+\sum_{\alpha_x}\lambda_{x \alpha_{x}}^2 & 1+\sum_{\alpha_y}\lambda_{y \alpha_{y}}^2 & 1+\sum_{\alpha_1}\lambda_{j_{1}\alpha_{1}}^2 & \cdots & 1+\sum_{\alpha_q}\lambda_{j_{q}\alpha_{q}}^2 & \times &\cdots & \times & s\\
\end{block}
\end{blockarray}
\end{scriptsize}
\\
=\quad
&\begin{blockarray}{ccccccccccc}
K_{xy} &\cdots & K_{xx} & K_{yy} & K_{j_{1}j_{1}} & \cdots & K_{j_{q}j_{q}} & K_{k_{1}l_{1}} & \cdots & K_{k_{m}l_{m}}\\
\begin{block}{(cccccccccc)c}
  0 & \cdots & 0 & 0 & 0 & \cdots & 0 & -1 &\cdots & O(\varepsilon) & \lambda_{k_1 l_1} \\
  \vdots & \vdots & \vdots & \vdots & \vdots & \vdots & \vdots & \vdots &\ddots &\vdots  & \vdots\\
  0 & \cdots & 0 & 0 & 0 & \cdots & 0 & O(\varepsilon) & \cdots &-1 & \lambda_{k_m l_m} \\
  1 & \cdots & 2\varepsilon & 0 & 0 & \cdots & 0 & \times & \cdots &\times & \lambda_{x i}\\
  1 & \cdots & 0 & 2\varepsilon & 0 & \cdots & 0 & \times & \cdots &\times & \lambda_{y i}\\
  0 & \cdots & 0 & 0 & 2\varepsilon & \cdots & 0 & \times & \cdots &\times  & \lambda_{j_1 i}\\
  \vdots & \vdots & \vdots & \vdots & \vdots & \ddots & \vdots & \vdots & \vdots & \vdots & \vdots\\
  0 & \cdots & 0 & 0 & 0 & \cdots & 2\varepsilon & \times &\cdots &\times & \lambda_{j_q i} \\
  0 & \cdots & \times & \times & \times & \cdots & \times &\times & \cdots & \times &  \lambda_{i n_1}\\
  \vdots & \vdots & \vdots & \vdots & \vdots & \vdots & \vdots &\vdots & \vdots & \vdots & \vdots \\
  0 & \cdots & \times & \times & \times & \cdots & \times &\times & \cdots & \times  &  \lambda_{i n_p} \\
  1 & \cdots & 2 & 2 & 2 & \cdots & 2 & 0 & \cdots & 0 & s\\
\end{block}
\end{blockarray}
\end{align*}
The two entries of column $K_{xy}$ in rows $\lambda_{xi}$ and $\lambda_{yi}$ can be eliminated by subtracting a multiple of columns $\{K_{xx},K_{yy}\}$. This may introduce nonzero entries in column $K_{xy}$ in the rows corresponding to the children of $i$ but these can then be eliminated using the row corresponding to $s$. After these operations then $J_S$ will again have the same form as that shown in Equation \ref{ranklb_mat} and so this case is also complete. 
\end{proof}

\begin{remark}
Lemma \ref{ranklowerbd} also holds for non-simple graphs which satisfy $\text{deg}(i)< |V|$, $|D|\leq \binom{|V|}{2}$, and have expected dimension $|D|+1$. 
\end{remark}

The following example illustrates the proofs of Lemma \ref{rankupbd} and Lemma \ref{ranklowerbd}.

\begin{example}
We again consider the graph pictured in Example \ref{ex:linearCyclicSEM} and set $i=3$ which satisfies $\deg(3) = 2 < 3 = |V| - 1$. 
The Jacobian $J(\psi)$ is
\[
\begin{blockarray}{ccccccccccc}
K_{11} & K_{22} & K_{33} & K_{44} & K_{12} & K_{23} & K_{34} & K_{13} & K_{24} & K_{14} \\
\begin{block}{(cccccccccc)c}
  2s\lambda_{12} & 0 & 0 & 0    & -s & 0 & 0 & 0 & 0 & 0 & \lambda_{12}\\
  0 & 2s\lambda_{23} & 0 & 0    & 0 & -s & 0 & 0 & 0 & 0 & \lambda_{23}\\
  0 & 2s\lambda_{24} & 0 & 0    & 0 & s\lambda_{34} & 0 & 0 & -s & 0 & \lambda_{24}\\
  0 & 0 & 2s\lambda_{34} & 0    & 0 & s\lambda_{24} & -s & 0 & 0 & 0 & \lambda_{34}\\
  1+\lambda_{12}^2 & 1+\lambda_{23}^2+\lambda_{24}^2 & 1+\lambda_{34}^2 & 1 & -\lambda_{12} & -\lambda_{23}+\lambda_{24}\lambda_{34} & -\lambda_{34} & 0 & -\lambda_{24} & 0 & s\\
\end{block}
\end{blockarray}
\]
Lemma \ref{ranklowerbd} guarantees that there exists a set $S \cap \{K_{13}, K_{23}, K_{33}, K_{34}\} = \emptyset$ and $\rank(J_S) \geq |D| - |\ch(3)| + 1 = 4 - 1 + 1 = 4$. 
From the proof of the lemma, the set $S$ which is constructed is $S = (S_E \setminus S_-) \cup S_+$ where
\begin{align*}
S_E &= \{K_{12}, K_{23}, K_{34}, K_{24}\},\\
S_- &= \{K_{23}, K_{34} \}, \\
\end{align*}
and $S_+$ can be any subset of size $\deg(3)+1=3$ from $\{K_{11}, K_{22}, K_{44}\}$ which means it must be the entire set. Thus $S = \{K_{11}, K_{22}, K_{44}, K_{12}, K_{24}\}$ and the submatrix $J_S$ is
\[
\begin{blockarray}{cccccc}
K_{11} & K_{22} & K_{44} & K_{12} & K_{24} \\
\begin{block}{(ccccc)c}
2s\lambda_{12} & 0 & 0    & -s & 0 & \lambda_{12}\\
0 & 2s\lambda_{24} & 0    & 0 & -s & \lambda_{24}\\
0 & 2s\lambda_{23} & 0    & 0 & 0 & \lambda_{23}\\
0 & 0 & 0    & 0 & 0 & \lambda_{34}\\
1+\lambda_{12}^2 & 1+\lambda_{23}^2+\lambda_{24}^2 & 1 & -\lambda_{12} & -\lambda_{24} & s\\
\end{block}
\end{blockarray}
\]
Observe that if we substitute in $s = 1$, $\lambda_{ji} = \epsilon$, and let all other edge weights be zero, then the submatrix becomes
\[
\begin{blockarray}{cccccc}
K_{11} & K_{22} & K_{44} & K_{12} & K_{24} \\
\begin{block}{(ccccc)c}
0 & 0 & 0    & -1 & 0 & \lambda_{12}\\
0 & 0 & 0    & 0 & -1 & \lambda_{24}\\
0 & 2\varepsilon & 0 & 0 & 0 & \lambda_{23}\\
0 & 0 & 0    & 0 & 0 & \lambda_{34}\\
1 & 1+\varepsilon & 1 & 0 & 0 & s\\
\end{block}
\end{blockarray}
\]
and we can now see that columns $K_{11}, K_{22}, K_{12}, K_{44}$ are linearly independent thus $\rank(J_S) \geq 4$. On the other hand observe that the row corresponding to $\lambda_{34}$ is a zero row and so $\rank(J_S)\leq 4$ which is guaranteed by Lemma \ref{rankupbd}. 
\end{example}

We are now ready to state our first major theorem which is mainly powered by the previous two lemmas. 

\begin{theorem}\label{outd}
\textup{\textbf{(Outdegree theorem)}}
Let $G_1=(V,D_1)$, $G_2=(V,D_2)$ be two simple directed graphs. If one of the graphs is not complete and there exists a node $i \in V$ such that $|\ch_{1}(i)| \neq |\ch_{2}(i)|$ then $G_1$ and $G_2$ have different Jacobian matroids.
\end{theorem}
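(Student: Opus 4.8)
The plan is to show that for a simple directed graph $G=(V,D)$ that is not complete, the out-degree $|\ch_G(i)|$ of every node $i$ is an invariant of the Jacobian matroid $\mathcal{M}(\psi_G)$; the theorem then follows by contraposition. Write $E$ for the set of columns of $J(\psi_G)$, indexed by the entries $K_{jl}$ with $j\le l$, which depends only on $V$ and hence is the common ground set of $\mathcal{M}(\psi_{G_1})$ and $\mathcal{M}(\psi_{G_2})$; recall that in the Jacobian matroid the rank of a column set $T\subseteq E$ is just $\rank(J_T)$ over the fraction field.

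First I would dispose of the case $|D_1|\ne|D_2|$: by Theorem \ref{dim} the two matroids then have different ranks, namely $|D_1|+1$ and $|D_2|+1$, so they are distinct and there is nothing more to prove. So assume $|D_1|=|D_2|=:|D|$. As $G_1$ and $G_2$ are simple they have at most $\binom{|V|}{2}$ edges, with equality exactly when the graph is complete; since one of them is not complete we get $|D|<\binom{|V|}{2}$, and therefore \emph{both} graphs are not complete. Writing $A_i=\{K_{jl}: i\in\{j,l\}\}$ for the $|V|$ columns that involve the index $i$, this also gives $|E\setminus A_i|=\binom{|V|}{2}\ge|D|+1$.

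Next I would prove the key claim: for any node $i$ and either graph $G\in\{G_1,G_2\}$, $r_{\mathcal{M}(\psi_G)}(E\setminus A_i)=|D|-|\ch_G(i)|+1$. For the upper bound, Lemma \ref{J} shows that each of the $|\ch_G(i)|$ rows of $J$ labelled by an out-edge $\lambda_{il}$ of $i$ vanishes identically on every column $K_{jl'}$ with $j,l'\ne i$; thus $J_{E\setminus A_i}$ has at least $|\ch_G(i)|$ zero rows out of $|D|+1$, giving $\rank(J_{E\setminus A_i})\le|D|-|\ch_G(i)|+1$ (this is exactly Lemma \ref{rankupbd}). For the lower bound, Lemma \ref{ranklowerbd} supplies a column set $S\subseteq E\setminus A_i$ of size $|D|+1$ with $\rank(J_S)\ge|D|-|\ch_G(i)|+1$; since $S\subseteq E\setminus A_i$ we get $\rank(J_{E\setminus A_i})\ge\rank(J_S)\ge|D|-|\ch_G(i)|+1$.

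Finally, suppose $\mathcal{M}(\psi_{G_1})=\mathcal{M}(\psi_{G_2})=:\mathcal{M}$. Both matroids have rank $|D|+1$ by Theorem \ref{dim} and the same rank function, so by the claim $|\ch_1(i)|=(|D|+1)-r_{\mathcal{M}}(E\setminus A_i)=|\ch_2(i)|$ for every $i$, contradicting the hypothesis; hence the matroids are different. The one spot requiring care is the passage from the ``size $|D|+1$'' submatrix statements in Lemmas \ref{rankupbd} and \ref{ranklowerbd} to a statement about the matroid rank of the fixed set $E\setminus A_i$, and non-completeness is precisely what makes this legitimate, since it forces $|E\setminus A_i|\ge|D|+1$. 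Beyond that, I expect no real obstacle: the substantive work is already contained in those two rank lemmas.
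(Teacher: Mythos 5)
Your proposal is correct and follows essentially the same route as the paper: after disposing of the case $|D_1|\neq|D_2|$ via Theorem \ref{dim}, it combines Lemma \ref{rankupbd} and Lemma \ref{ranklowerbd} to compare ranks, exactly as the paper does. The only (harmless) cosmetic difference is that you phrase the conclusion as ``the out-degree of $i$ is recovered from the matroid rank of the canonical column set $E\setminus A_i$,'' whereas the paper exhibits the specific witness set $S\subseteq E\setminus A_i$ from Lemma \ref{ranklowerbd} on which the two Jacobians have different ranks.
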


\begin{proof}
If $|D_1|\neq |D_2|$, the two models must have different dimension and hence different matroids so we assume that $|D_1| = |D_2|$. 
Without loss of generality assume that $|\ch_1(i)|>|\ch_2(i)|$. By Lemma \ref{ranklowerbd}, there exists a column set $S=(S_E\backslash S_-)\cup S_+$ such that $\rank(J^{(2)}_S)\geq |D_2|-|\ch_2(i)|+1$ but by Lemma \ref{rankupbd} we have that $\rank(J^{(1)}_S)\leq |D_1|-|\ch_1(i)|+1$ which yields
\[
\rank(J^{(2)}_S) \geq |D_2| - |\ch_2(i)|  + 1 > |D_1| - |\ch_1(i)| + 1 \leq \rank(J^{(1)}_S). 
\]
Thus it must be that $J^{(1)}$ and $J^{(2)}$ have different matroids. 
\end{proof}

Since this theorem is mainly powered by Lemma \ref{ranklowerbd}, it can also be extended to non-simple graphs if they have expected dimension $|D|+1$ and the degree of each node is at most $|V|-1$ but we will omit the details since we are primarily focused on simple graphs. The previous theorem immediately implies some identifiability results which we will summarize at the end of this section. 
Essentially though, it tells us that any two graphs with different out-degree sequences are distinguishable. 
The theorem also gives us the following corollary for graphs which do yield the same Jacobian matroid.

\begin{corollary}
If two simple directed graphs have the same Jacobian matroid, and at least one of them is not complete, then they must have the same sink nodes.
\end{corollary}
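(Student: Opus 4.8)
The plan is to obtain this as an immediate consequence of the Outdegree theorem (Theorem \ref{outd}). The key observation is that a node $i$ is a sink of a directed graph $G$ precisely when $|\ch_G(i)| = 0$, so "having the same sink nodes" is just the statement that the out-degree-zero entries of the two out-degree sequences agree.

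Concretely, suppose $G_1 = (V, D_1)$ and $G_2 = (V, D_2)$ are simple directed graphs with the same Jacobian matroid, and (without loss of generality) $G_1$ is not complete. Apply the contrapositive of Theorem \ref{outd}: since the two graphs share a matroid and one of them is non-complete, there can be no node $i$ with $|\ch_1(i)| \neq |\ch_2(i)|$, i.e. $|\ch_1(i)| = |\ch_2(i)|$ for every $i \in V$. In particular $|\ch_1(i)| = 0$ if and only if $|\ch_2(i)| = 0$, which says exactly that $i$ is a sink of $G_1$ if and only if it is a sink of $G_2$. Hence the sink sets coincide, and (if desired) one can then invoke Lemma \ref{sinks_and_inedges} to deduce that such a common sink has the same parent set in both graphs.

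There is no real obstacle here: the corollary is a direct repackaging of Theorem \ref{outd} once sinks are identified with out-degree-zero nodes. The only hypothesis that must be tracked is that at least one graph be non-complete, which is precisely what allows the appeal to Theorem \ref{outd}; this is given. Thus the entire argument is the two-line contrapositive above, and no further computation is needed.
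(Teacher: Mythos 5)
Your argument is correct and is exactly how the paper obtains this corollary: the statement is presented as an immediate consequence of Theorem \ref{outd}, via the contrapositive together with the identification of sinks as the out-degree-zero nodes. No issues.
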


While Theorem \ref{outd} is able to distinguish a very large proportion of graphs, there may be pairs of graphs with the same out-degree sequences but different matroids. Our next theorem is based on the following definition. 

\begin{definition}
Let $G = (V, D)$ be a directed graph. A \emph{transitive triangle} or \emph{shielded collider} in $G$ is a triple of nodes $i,j,k \in V$ such that the induced subgraph of $G$ on $i,j,k$ is of one of the forms pictured in Figure \ref{fig:transitiveTriangles}. Alternatively, it means that for all $j\in V$ and for all $i\in \ch(j)$ it holds that $\ch(j)\cap \ch(i)=\emptyset$. If a graph has no transitive triangles then we say it is \emph{transitive triangle free}. 
\end{definition}

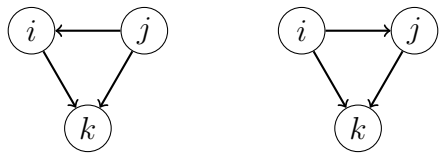
\begin{figure}
    \centering
\begin{tikzpicture}
[main_node/.style={circle,fill=white,draw,minimum size=1.5 em,inner sep=0pt]}]

\node[main_node] (0) at (0, 0) {$i$};
\node[main_node] (1) at (1.5, 0) {$j$};
\node[main_node] (2) at (0.75, -1.5*1.732/2) {$k$};

\draw[->, thick] (0) to (2);
\draw[->, thick] (1) to (2);
\draw[->, thick] (1) to (0);

\node at (3,0) {};
\end{tikzpicture}
\begin{tikzpicture}
[main_node/.style={circle,fill=white,draw,minimum size=1.5 em,inner sep=0pt]}]

\node[main_node] (0) at (0, 0) {$i$};
\node[main_node] (1) at (1.5, 0) {$j$};
\node[main_node] (2) at (0.75, -1.5*1.732/2) {$k$};

\draw[->, thick] (0) to (2);
\draw[->, thick] (1) to (2);
\draw[->, thick] (0) to (1);
\end{tikzpicture}
    \caption{The two forbidden subgraph structures for transitive triangle (shielded collider) free graphs.}
    \label{fig:transitiveTriangles}
\end{figure}

\begin{theorem}\label{notri}
Let $G_1=(V,D_1)$, $G_2=(V,D_2)$ be different, transitive triangle-free, non-complete, simple directed graphs with node set $V$. Then $G_1$ and $G_2$ have different Jacobian matroids.
\end{theorem}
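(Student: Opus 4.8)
The plan is to mirror the proof of the Outdegree Theorem: assuming the two matroids agree, we construct a single set $S$ of coordinate columns with $|S| = |D_1|+1$ on which the two Jacobians have different ranks, which contradicts equality of the matroids (and, via Proposition \ref{coorid}, would even give generic identifiability). First, by Theorem \ref{outd} we may assume $G_1$ and $G_2$ have the same out-degree sequence; in particular $|D_1| = |D_2| =: m$, both models have dimension $m+1$, and by Theorem \ref{dim} both Jacobians have full rank $m+1$. Suppose toward a contradiction that $\mathcal{M}(\psi_{G_1}) = \mathcal{M}(\psi_{G_2})$. By Lemma \ref{adj}, a pair $\{i,j\}$ is adjacent or has a common child in $G_1$ if and only if the same holds in $G_2$; and transitive-triangle-freeness forces these two alternatives to be mutually exclusive within each graph, since an edge between $i$ and $j$ together with a common child of $i$ and $j$ is exactly a transitive triangle.

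The key structural input, read off from Lemma \ref{J}, is that the Jacobian row $R_{\lambda_{kl}}$ indexed by an edge $k\to l$ is supported precisely on the columns $\{K_{km} : m \in \{k,l\}\cup\pa(l)\}$, all of whose indices involve $k$; moreover transitive-triangle-freeness guarantees that every $m\in\pa(l)\setminus\{k\}$ is non-adjacent to $k$. Hence a column set $S$ that avoids all of these columns makes $R_{\lambda_{kl}}$ restrict to a zero row of $J_S$. Now, since $G_1\neq G_2$ there is an edge, say $(i,j)\in D_1\setminus D_2$, and Lemma \ref{adj} gives two cases: either $(j,i)\in D_2$ (the edge is reversed), or $i$ and $j$ are non-adjacent in $G_2$ and therefore have a common child $k$, producing an unshielded collider $i\to k\leftarrow j$ in $G_2$. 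In either case the edge-rows of $G_2$ near $i$ and $j$ involve a different pattern of diagonal and off-diagonal columns than those of $G_1$, and the plan is to choose $S$, of size $m+1$, so that: (a) $S$ meets the support of every edge-row of $G_1$; (b) $S$ misses the entire support of some edge-row of $G_2$, forcing a zero row and hence $\rank J^{(2)}_S\le m$; and (c) $J^{(1)}_S$ attains full rank $m+1$. For (c) one specializes the edge weights exactly as in Lemma \ref{ranklowerbd} (set $s=1$, turn on one edge weight per selected pivot column, all others $0$) and uses the simplified $s$-row of Lemma \ref{lemma:simplifiedSRowJac}, so that $J^{(1)}_S$ becomes block upper triangular with each diagonal block a scalar multiple of an identity matrix or strictly diagonally dominant, just as in the matrix displayed in Equation \eqref{ranklb_mat}. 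Then $S\in\mathcal{M}(\psi_{G_1})\setminus\mathcal{M}(\psi_{G_2})$, the desired contradiction.

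The main obstacle is the combinatorial bookkeeping behind the construction of $S$: the $m+1$ columns must pivot through every edge of $G_1$ while leaving one edge-row of $G_2$ entirely uncovered, and this has to be organized into cases according to whether the relocated edge is reversed or broken into a collider, whether the relevant node has degree $|V|-1$, and whether it is a sink — essentially the casework of Lemma \ref{ranklowerbd}, now run simultaneously for the two graphs. Transitive-triangle-freeness is precisely the hypothesis that keeps the edge-row supports small enough (the co-parents of $l$ through another parent are never its neighbour), so that a column one needs to omit from $S$ in order to kill a $G_2$ edge-row is not simultaneously forced into $S$ to cover a $G_1$ edge-row; in some degenerate configurations the rank drop for $G_2$ may have to be extracted from an explicit linear dependence rather than a literal zero row. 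Finally, Lemma \ref{sinks_and_inedges} together with the matched out-degrees is used to rule out the extremal configurations (for instance both $i$ and $j$ adjacent to every other vertex) in which the counting would otherwise be too tight.
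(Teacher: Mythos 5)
You have assembled the right ingredients---the reduction to equal out-degree sequences via Theorem \ref{outd}, the case split via Lemma \ref{adj} into ``edge reversed in $G_2$'' versus ``edge replaced by a common child'', the observation that the row $R_{\lambda_{kl}}$ is supported exactly on $\{K_{km}: m\in\{k,l\}\cup\pa(l)\}$, and the fact that transitive-triangle-freeness makes co-parents non-adjacent---but the proof itself is missing. The set $S$ is never constructed and no rank is ever computed; everything that makes the theorem true is deferred to ``combinatorial bookkeeping'', and you yourself concede that in some configurations the zero-row mechanism fails and ``an explicit linear dependence'' would be needed instead, without exhibiting it. That deferred step is the entire content of the argument.

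Moreover, the target you set for $S$ is both stronger than necessary and different from what the paper actually achieves: you ask for $J^{(1)}_S$ to have full rank $m+1$ while $J^{(2)}_S$ acquires a literal zero row, and it is far from clear such an $S$ exists (the natural candidates for full-rank sets of $\mathcal{M}(\psi_{G_1})$ contain all edge columns $K_{kl}$, which tends to force $S$ to meet the support of every $G_2$ edge row as well, by Lemma \ref{adj}). The paper instead compares two rank-\emph{deficient} submatrices. It takes the set $S$ of Lemma \ref{ranklowerbd} built around the node $j$, so that Lemmas \ref{rankupbd} and \ref{ranklowerbd} pin both $\rank(J^{(1)}_S)$ and $\rank(J^{(2)}_S)$ near $|D|-|\ch(j)|+1$, and then swaps a single column, replacing $K_{ll}$ (with $l\in\ch_2(i)\cap\ch_2(j)$ the common child) by $K_{ij}$. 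This swap raises $\rank(J^{(2)}_{S'})$ to $|D|-|\ch(j)|+2$ because the row $\lambda_{jl}$ of $J^{(2)}$ carries the nonzero entry $s\lambda_{il}$ in column $K_{ij}$, whereas transitive-triangle-freeness of $G_1$ forces column $K_{ij}$ to vanish in every row $\lambda_{jx}$ of $J^{(1)}$ (since $i\notin\ch_1(j)$ and $\ch_1(i)\cap\ch_1(j)=\emptyset$), so $\rank(J^{(1)}_{S'})$ stays at most $|D|-|\ch(j)|+1$; the same hypothesis is also what keeps the edge-column pivot block equal to $-I$ rather than $-I+\varepsilon A$. This one-column-swap mechanism, together with the parallel treatment of the case $(j,i)\in D_2$, is the argument your sketch does not supply.
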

\begin{proof}
We again proceed by constructing a subset $S$ such that the induced submatrices $J^{(1)}_S$ and $J^{(2)}_S$ have different ranks. If $G_1$ and $G_2$ satisfy the condition in Theorem \ref{outd}, the conclusion obviously holds. Otherwise every node has the same out-degree in two graphs. Since $G_1$ and $G_2$ are different, there must exist a node $i$ such that $\ch_1(i)\neq \ch_2(i)$. So assume that $(i,j)\in D_1$ but $(i,j)\notin D_2$. We may also assume that either $(j,i)\in D_2$ or $\ch_2(i)\cap \ch_2(j)\neq\emptyset$ or else Lemma \ref{adj} would immediately imply that the matroids are different.

In the case that $(j,i)\notin D_2$, then $\deg_2(j)<|V|-1$ and there exists some node $l\in Ch_2(i)\cap Ch_2(j)$. Let $S = (S_E \setminus S_-) \cup S_+$ where
\begin{align*}
&S_E = \{K_{km} ~|~ (k, m) \in D_2 ~\mathrm{or}~ (m, k) \in D_2\}, \\
&S_- = \{K_{kj} ~|~ (k, j) \in D_2 ~\mathrm{or}~ (j, k) \in D_2\}, \\
\end{align*}
and $S_+$ be a subset of size $(\text{deg}(j)+1)\leq |V|-1$ from $\{K_{11},K_{22},\ldots, K_{j-1,j-1},K_{j+1,j+1},\ldots\}$ containing all $K_{m m}$ such that $(m,j)\in D_2$ or $(j, m)\in D_2$ and $K_{ii}$. Observe that this exactly the set $S$ which is constructed in Lemma \ref{ranklowerbd} (except $i$ has been replaced with $j$ and $j_0$ with $i$). Thus by Lemmas \ref{rankupbd} and \ref{ranklowerbd} it holds that
$$
\rank(J^{(2)}_S) \geq |D_2|-|\ch_2(j)|+1=|D_1|-|\ch_1(j)|+1 \geq \rank(J^{(1)}_S).
$$
Note that $K_{ll}\in S$ but $K_{ij}\notin S$ so we replace $K_{ll}$ by $K_{ij}$ to obtain a new set $S'$ which will yield a higher rank submatrix of $J^{(2)}$ without increasing the rank of $J^{(1)}$. To see this consider $J^{(2)}$ evaluated at $s=1,\lambda_{il}=1, \lambda_{mj}=1$ where $m \in \pa(j)$ and all other edge weights $0$ which yields the block matrix
\begin{align}\label{ttf_J2}
J^{(2)}_S=
\begin{blockarray}{cccccc}
K_{ii} & K_{m_1 m_1}, \ldots, K_{m_p m_p} & K_{ij} & K_{j_1 j_1 }, \ldots, K_{j_q j_q} & K_{k_1 l_1} \ldots K_{k_n l_n}\\
\begin{block}{(ccccc)c}
\textbf{a} & \mathbf{0} & 0 & \mathbf{0} & -I_{|D|-\text{deg}(j)} & \lambda_{k_\alpha l_\alpha},~ k_\alpha, l_\alpha \neq j\\
\textbf{0} & \mathbf{0} & 0 &  2I_{|\pa(j)|} & \mathbf{0} & \lambda_{j_\alpha,j},~ j_\alpha \in \pa(j)\\
0 & \textbf{0}^T & 1 & \textbf{0}^T & \textbf{0}^T & \lambda_{jl}\\
\vdots & \vdots & \vdots & \vdots & \vdots & \vdots\\
1 & \times & \times & \times & \textbf{0}^T & s\\
\end{block}
\end{blockarray}.
\end{align}
The rightmost pivot block was previously of the form $I+\varepsilon A$, where the second terms comes from common children between $k_\alpha$ and $l_\alpha$. The transitive triangle-free assumption now makes the second term zero. Now observe that the vector $\textbf{a}$ in the upper left hand corner can be eliminated using the columns coming from $K_{k_\alpha l_\alpha}$ block which implies that
$\rank(J^{(2)}_{S'}) \geq |D_2| - \deg(j) + |\pa(j)|  + 2 = |D_2| - |\ch(j)| + 2$. 
On the other hand, if $x \in \ch_1(j)$ then the entry $(J^{(1)}_{S'})_{\lambda_{jx, K_{ij}}} =0$ since $i \notin \ch_1(j)$ and $\ch_1(i) \cap \ch_1(j) = \emptyset$ since $G_1$ is transitive triangle-free. Hence it still holds that $\rank(J^{(1)}_{S'}) \leq |D_1| - |\ch(i)| + 1$ which completes this case. 

If $(j,i)\in D_2$, then the same construction of $S$ which is used in Theorem \ref{outd} can be used here since it must be that either  $\deg_2(j) <|V|-1$ or $\deg_2(j)=|V|-1$ but $j$ is not a sink node. In either case, one can take $S$ and then replace $K_{ii}$ with $K_{ij}$ to obtain $S'$. The argument then proceeds identically to the previous case so we omit the details. 
\end{proof}

The following example illustrates the previous theorem. In particular, it shows exactly how one may find a set $S$ which verifies that the matroids of two graphs with no transitive triangles differ. 

\begin{example}\label{eg_ttf}
Consider the graphs $G_1$ and $G_2$ in Figure \ref{fig:sameOutDegSeq} which are the same except for the change in the outgoing edges from node $1$. These two graphs clearly have the same out-degree sequence which is $(3, 1, 1, 0, 1, 1)$. This means that Theorem \ref{outd} cannot be applied to certify that the $G_1$ and $G_2$ have different Jacobian matroids. We can instead use the construction in Theorem \ref{notri} to find a suitable set $S$ which will distinguish the matroids.

In this case we have the $\ch_1(1) \neq \ch_2(1)$ so $i = 1$. Furthermore, the edge $(1, 2) \in D_1$ but $(1,2) \notin D_2$ so $j = 2$. This means that $S = (S_E \setminus S_-) \cup S_+$ where
\begin{align*}
&S_E = \{K_{km} ~|~ (k, m) \in D_2 ~\mathrm{or}~ (m, k) \in D_2\} 
= \{K_{14}, K_{15}, K_{16}, K_{25}, K_{26}, K_{35}, K_{36}\}, \\
&S_- = \{K_{kj} ~|~ (k, j) \in D_2 ~\mathrm{or}~ (j, k) \in D_2\} 
= \{K_{25}, K_{26}\},\\
&S_+ = \{K_{55}, K_{66}\}. 
\end{align*}
Note that in the language of Theorem \ref{notri}, the common child of $i=1$ and $j=2$ is $l = 5$.   Now according to Theorem \ref{notri}, the set which will actually yield a rank difference is the set $S'$ which is obtained by replacing $K_{ll} = K_{55}$ with $K_{ij} = K_{12}$. This yields the set
\[
S' = \{K_{11}, K_{12}, K_{66}, K_{14}, K_{15}, K_{16}, K_{35}, K_{36}\}
\]
which indeed has $|D_2|+1 = 8$ elements as is desired. Then $J_S^{(2)}$ has the form
\begin{equation*}
J_S^{(2)}=
\begin{blockarray}{ccccccccc}
K_{11} & K_{12} & K_{66} & K_{14} & K_{15} & K_{16} & K_{53} & K_{36} \\
\begin{block}{(cccccccc)c}
2s\lambda_{14}& 0& 0& -s& 0& 0& 0& 0 & \lambda_{14} \\
2s\lambda_{15}& s\lambda_{25}& 0& 0 & -s& 0& 0& 0& \lambda_{15} \\ 
2s\lambda_{16}& 0& 0& 0& 0& -s& 0& 0 & \lambda_{16} \\
0& 0& 0& 0& 0& 0& 0& -s & \lambda_{36} \\
0& 0& 0& 0& 0& 0& -s& 0 & \lambda_{53} \\
0& 0& 2s\lambda_{62}& 0& 0& 0& 0& 0 & \lambda_{62} \\
0& s\lambda_{15}& 0& 0& 0& 0& 0& 0& \lambda_{25} \\
2& 0& 2& -\lambda_{14}& -\lambda_{15}& -\lambda_{16}& -\lambda_{53}& -\lambda_{36} & s\\
\end{block}
\end{blockarray}.
\end{equation*}
It is clear that this matrix has rank $8$ while $J_S^{(1)}$ only has rank 7. 
\end{example}

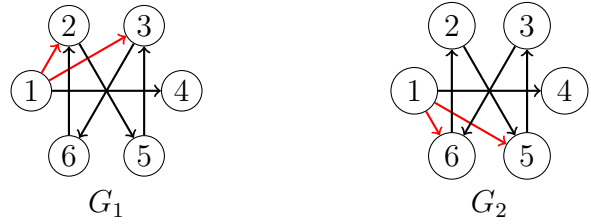
\begin{figure}
\centering
\begin{minipage}[t]{0.3\textwidth}
\centering
\begin{tikzpicture}
[main_node/.style={circle,fill=white,draw,minimum size=1 em,inner sep=2pt]}]

\node[main_node] (1) at (0, 0) {$1$};
\node[main_node] (2) at (0.5, 0.866) {$2$};
\node[main_node] (3) at (1.5, 0.866) {$3$};
\node[main_node] (4) at (2, 0) {$4$};
\node[main_node] (5) at (1.5, -0.866) {$5$};
\node[main_node] (6) at (0.5, -0.866) {$6$};

\draw [->, thick, red] (1) to (2);
\draw [->, thick, red] (1) to (3);
\draw [->, thick] (1) to (4);
\draw [->, thick] (2) to (5);
\draw [->, thick] (3) to (6);
\draw [->, thick] (5) to (3);
\draw [->, thick] (6) to (2);

\node (7) at (1, -1.5) {$G_1$};
\end{tikzpicture}
\end{minipage}
\begin{minipage}[t]{0.3\textwidth}
\centering
\begin{tikzpicture}
[main_node/.style={circle,fill=white,draw,minimum size=1.5 em,inner sep=2pt]}]

\node[main_node] (1) at (0, 0) {$1$};
\node[main_node] (2) at (0.5, 0.866) {$2$};
\node[main_node] (3) at (1.5, 0.866) {$3$};
\node[main_node] (4) at (2, 0) {$4$};
\node[main_node] (5) at (1.5, -0.866) {$5$};
\node[main_node] (6) at (0.5, -0.866) {$6$};

\draw [->, thick, red] (1) to (5);
\draw [->, thick, red] (1) to (6);
\draw [->, thick] (1) to (4);
\draw [->, thick] (2) to (5);
\draw [->, thick] (3) to (6);
\draw [->, thick] (5) to (3);
\draw [->, thick] (6) to (2);

\node (7) at (1, -1.5) {$G_2$};
\end{tikzpicture}
\end{minipage}
\caption{Two graphs with the same out-degree sequence but which have no transitive triangles (shielded colliders). }
\label{fig:sameOutDegSeq}
\end{figure}

Our last major theorem in this section provides a much stronger generalization of Theorem \ref{outd} and Theorem \ref{notri}. While this condition will allow us to distinguish many more pairs of graphs than Theorems \ref{outd} or \ref{notri}, it can be much more difficult tot check. Before introducing this theorem we need the following definition. 

\begin{definition}
Let $G = (V, D)$ be a directed graph and $i \in V$. A subset $L \subseteq \neighbors(i)$ is \emph{parentally closed} with respect to $i$ if $\pa(L) \cap \neighbors(i) \subseteq L$.
We denote the collection of all parentally closed sets with respect to $i$ by $\mathcal{L}_i$. 
\end{definition}

\begin{theorem}\label{outd_layer}
Let $G_1=(V,D_1)$, $G_2=(V,D_2)$ be simple directed graphs and which are not complete. Let $i \in V$ and $\mathcal{L}_i^k$ be the collection of parentally closed sets with respect to $i$ for graph $G_k$. 
If there exists a set $L\in \mathcal{L}_i^k$ such that $|\ch_{k}(i)\cap L|>|\ch_{3-k}(i)\cap L|$, $k\in\{1,2\}$, then $G_1$ and $G_2$ have different Jacobian matroids.
\end{theorem}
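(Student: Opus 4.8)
The plan is to imitate the proof of Theorem~\ref{outd}. If $|D_1|\neq|D_2|$ the two models have different dimension, so by Theorem~\ref{dim} their matroids differ; assume therefore $|D_1|=|D_2|=:|D|$. Relabelling $G_1,G_2$ if necessary, we may take $k=1$, so $L\in\mathcal{L}_i^1$ is parentally closed with respect to $i$ in $G_1$ and $c_1:=|\ch_1(i)\cap L|>c_2:=|\ch_2(i)\cap L|$ (in particular $c_1\geq 1$). As in Theorem~\ref{outd}, it suffices to exhibit a single set of columns $S$ with $\rank(J^{(1)}_S)\neq\rank(J^{(2)}_S)$, since equal matroids have equal rank functions; concretely we will arrange
\[
\rank(J^{(2)}_S)\;\geq\;|D|+1-c_2\;>\;|D|+1-c_1\;\geq\;\rank(J^{(1)}_S).
\]

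Call a set of columns $S$ \emph{$(i,L)$-admissible} if it avoids $K_{ii}$, avoids $K_{ij}$ for every $j\in L$, and avoids $K_{ib}$ for every $b\in\pa_1(\ch_1(i)\cap L)\setminus\neighbors_1(i)$. The first step is a localized form of Lemma~\ref{rankupbd}: if $S$ is $(i,L)$-admissible, then $\rank(J^{(1)}_S)\leq|D|+1-c_1$. To see this, fix $j\in\ch_1(i)\cap L$; by Lemma~\ref{J} the row $R_{\lambda_{ij}}$ of $J^{(1)}$ has nonzero entries only in the columns $K_{ii}$, $K_{ij}$, and $K_{ib}$ with $b\in\pa_1(j)\setminus\{i\}$. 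The first two are excluded by admissibility. For the third: if $b\in\neighbors_1(i)$ then $b\in\pa_1(L)\cap\neighbors_1(i)\subseteq L$ because $L$ is parentally closed in $G_1$, so $K_{ib}$ is one of the excluded columns $K_{ij'}$ with $j'\in L$; and if $b\notin\neighbors_1(i)$ then $K_{ib}$ is excluded by the third clause. Hence $R_{\lambda_{ij}}$ restricted to $S$ is the zero row for each of the $c_1$ indices $j\in\ch_1(i)\cap L$, and the bound follows. This is the step where the parentally closed hypothesis is essential.

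The second step is a localized form of Lemma~\ref{ranklowerbd}: there is an $(i,L)$-admissible $S$ with $\rank(J^{(2)}_S)\geq|D|+1-c_2$. It is built as in the proof of Lemma~\ref{ranklowerbd}, but relative to $L$: take the edge columns $K_{ab}$ of $G_2$ with $a,b\neq i$, the admissible columns $K_{ij}$ with $j\in\neighbors_2(i)\setminus L$, and a suitable collection of diagonal columns $K_{jj}$ (including $K_{jj}$ for $j\in\pa_2(i)$, plus, in the near-complete and sink subcases, an extra diagonal column attached to a non-adjacent pair, exactly as in Lemma~\ref{ranklowerbd}). Evaluating $J^{(2)}_S$ at $s=1$ with the weights on the edges incident to $i$ equal to a small $\varepsilon>0$ and all other weights $0$, and reshuffling the rows, one obtains a matrix that is block upper-triangular up to $O(\varepsilon)$ corrections, with diagonal blocks: the block on the rows $\lambda_{ab}$ ($a,b\neq i$), which is strictly diagonally dominant, hence invertible for small $\varepsilon$; a diagonal block with nonzero diagonal on the in-edge rows $\lambda_{ji}$ ($j\in\pa_2(i)$) and the surviving out-edge rows $\lambda_{ij}$ ($j\in\ch_2(i)\setminus L$); and the nonzero $1\times 1$ block from the $s$-row (using a spare diagonal column, as in Lemma~\ref{ranklowerbd}). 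Thus for small $\varepsilon>0$ the corresponding $(|D|+1-c_2)$-minor is nonzero, so $\rank(J^{(2)}_S)\geq|D|+1-c_2$; the only rows left out are the $c_2$ rows $\lambda_{ij}$ with $j\in\ch_2(i)\cap L$. Combining with the first step gives $\rank(J^{(2)}_S)>\rank(J^{(1)}_S)$, so $G_1$ and $G_2$ have different Jacobian matroids. Non-completeness of both graphs is used exactly as in Lemmas~\ref{rankupbd} and~\ref{ranklowerbd} to guarantee that the needed column sets exist.

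The main difficulty is the second step, specifically the interaction between the two graphs: the set $S$ is constrained by an admissibility condition read off from $G_1$ (through $\pa_1(\ch_1(i)\cap L)\setminus\neighbors_1(i)$), while the rank it must realize lives in $G_2$, whose parent/child structure is unrelated a priori. The awkward case is an out-edge $i\to j$ of $G_2$ with $j\notin L$ but $j\in\pa_1(\ch_1(i)\cap L)\setminus\neighbors_1(i)$: the natural pivot column $K_{ij}$ is then forbidden and must be replaced by the diagonal column $K_{jj}$ (whose $2s\lambda_{ij}$ entry in row $\lambda_{ij}$ becomes nonzero once $\lambda_{ij}$ is also set to $\varepsilon$), followed by clearing the $s$-row; this, together with the sink and near-complete subcases, is the bookkeeping that made the proof of Lemma~\ref{ranklowerbd} lengthy, and it has to be redone here. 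One should also check the claimed generalization: $L=\neighbors_1(i)$ recovers Theorem~\ref{outd}, and the parental closure in $G_1$ of a single child $j$ witnessing $\ch_1(i)\neq\ch_2(i)$ recovers Theorem~\ref{notri}.
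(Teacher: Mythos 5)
Your overall strategy is the paper's: fix a single column set $S$, use parental closure to kill the rows $\lambda_{ij}$, $j\in\ch_1(i)\cap L$, of $J^{(1)}_S$ and get $\rank(J^{(1)}_S)\le |D|+1-c_1$, then realize $\rank(J^{(2)}_S)\ge |D|+1-c_2$ by an $\varepsilon$-evaluation as in Lemma~\ref{ranklowerbd}. The paper's $S=(S_E\setminus S_-)\cup S_+$ takes $S_E$ to be the edge columns of $G_2$, $S_-=\{K_{im}: m\in L\}$, and $S_+$ a set of diagonal columns; it does \emph{not} impose your third admissibility clause. Your observation motivating that clause --- that a column $K_{ib}$ with $b\in\pa_1(j)$ and $b\in\neighbors_2(i)\setminus\neighbors_1(i)$ can survive into $S$ and place the nonzero entry $s\lambda_{bj}$ into row $\lambda_{ij}$ of $J^{(1)}$, since parental closure only forces $b\in L$ when $b\in\neighbors_1(i)$ --- is a legitimate refinement of the upper-bound step.

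The gap is in how you then rescue the lower bound. Your fix for the ``awkward'' rows rests on a miscalculation: by Lemma~\ref{J}(1), the diagonal column $K_{jj}$ is supported only on the rows $\lambda_{jl}$ with $l\in\ch(j)$ and on the $s$-row, so its entry in row $\lambda_{ij}$ is identically zero; the entry $2s\lambda_{ij}$ of row $\lambda_{ij}$ sits in column $K_{ii}$, which your admissibility condition excludes. Hence for $j\in\ch_2(i)\setminus L$ with $K_{ij}$ forbidden by your third clause, the row $\lambda_{ij}$ of $J^{(2)}_S$ has no pivot among $K_{jj}$, $K_{ii}$, $K_{ij}$, and its only other possible nonzero entries lie in columns $K_{ib}$ with $b\in\pa_2(j)$, which need not be admissible or even lie in $S_E$. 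Each such row can therefore vanish identically on $S$, so $\rank(J^{(2)}_S)\ge |D|+1-c_2$ is not established, and the strict inequality against $|D|+1-c_1$ can be lost entirely. (The paper keeps $K_{ij}$ for every $j\in\neighbors_2(i)\setminus L$ precisely so that those rows retain their $-s$ pivots.) To complete your variant you would need either to show that the awkward configuration cannot occur --- e.g.\ after first reducing via Lemma~\ref{adj} so that the two graphs agree on which pairs are adjacent or share children --- or to exhibit genuine pivots for those rows; as written, the second step does not go through.
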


\begin{proof}
The proof follows similarly to that of Theorems \ref{outd} and \ref{notri}. We will again construct a set $S$ for which the rank function of the corresponding matroids differ. 

Suppose that the inequality $|\ch_{1}(i)\cap L|>|\ch_{2}(i)\cap L|$ holds for some $L\in\mathcal{L}_i^1$. If $L=V\backslash\{i\}$, then we are in the case of Theorem \ref{outd}. Otherwise we can assume that $|L|<|V|-1$. Under this condition, we set 
\begin{align*}
& S_E=\{K_{mn} ~|~ (m,n)\ {\rm or}\ (n,m) \in D_2\}, \\
&S_-=\{K_{im} ~|~ m\in L\}, \\
&S_+=\{K_{mm} ~|~  (m,i)\ {\rm or}\ (i,m)\in D_2, m\in L\}\cup\{K_{j_0j_0}\}, 
\end{align*}
where $j_0$ is an arbitrary node in $V\backslash(\{i\}\cup L)$. The size of the column set $S=(S_E\backslash S_-)\cup S_+$ is $|D_2|-|L\cap \neighbors_2(i)| + |L\cap \neighbors_2(i)| +1 = |D_2|+1$, hence the submatrices $J_S^{(1)},J_S^{(2)}$ are of size $(|D_2|+1)\times (|D_2|+1)$. 

For each node $j\in \ch_1(i)\cap L$, nonzero entries of $J^{(1)}$ in the row $\lambda_{ij}$ may only appear in columns $K_{ii}, K_{ij}$ or $K_{il}$, for $l \in \pa_1(j)$.  We claim that none of these columns are contained in $S$. By construction we have that $K_{ii} \notin S, S_+$ and $K_{ij} \in S_-$ since the parental closure of $L$ implies that $j \in L$. If $K_{il} \in S_E$  then $l \in \neighbors(i)$ and thus $l \in \neighbors(i) \cap \pa_1(j) \subseteq L$ so $l \in L$ (this is pictured in Figure \ref{fig:parentallyClosedSetProof}). This implies that $K_{il} \in S_-$ and so it holds that $K_{il} \notin S$. As a result we get that there are $|\ch_1(i) \cap L|$ zero rows of $J_S^{(1)}$ so $\rank(J_S^{(1)}) \leq |D_1| - |\ch_1(i) \cap L| + 1$.

Next, we show that for certain values of the parameters, $\rank(J_S^2) \geq |D_2|+1-|\ch_2(i)\cap L|$. For each $l_q \in \pa_2(i) \cap L$, let $\lambda_{i l_q}= \varepsilon$, let $s=1$, and all other edge weights be zero. Then the submatrix $J_S^2$ takes similar form as that in \eqref{ranklb_mat}, but with $|\ch_2(i)\cap L|$ uncertain rows instead of $|\ch_2(i)|$. This implies that
\[
\rank(J_S^{(2)}) \geq |D_2| - |\ch_2(i) \cap L| + 1 > |D_1| - |\ch_1(i) \cap L| + 1 \geq \rank(J_S^{(1)})
\]
which completes the proof. 
\end{proof}

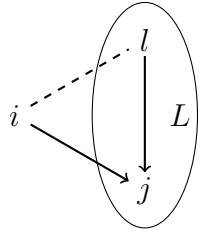
\begin{figure}
\centering
\begin{tikzpicture}
[main_node/.style={draw=none,fill=none, minimum size=1 em, inner sep=1.5pt}]

\node[main_node] (0) at (0, 0) {$i$};
\node[main_node] (1) at (1.732, -1) {$j$};
\node[main_node] (2) at (1.732, 1) {$l$};

\draw[->, thick] (0) to (1);
\draw[-, thick, dashed] (0) to (2);
\draw[->, thick] (2) to (1);
\draw[rotate=0] (1.732, 0) ellipse (0.7cm and 1.5cm) node at (2.2, 0) {$L$};
\end{tikzpicture}
\caption{This displays the subgraph relating $i, j$, and $l$ in the proof of Theorem \ref{outd_layer}. Since $L$ is parentally closed and $j \in L$ and $l \in \pa(j) \cap \neighbors(i)$, it must be that $l \in L$. }
\label{fig:parentallyClosedSetProof}
\end{figure}

Theorems \ref{outd} and \ref{notri} are actually special cases of Theorem \ref{outd_layer}. The former only checks the trivial parent closed set $L= \neighbors(i)$, while the latter considers the powersets $\mathcal{L}_1= \mathcal{P}(\ch_1(i))$ and $\mathcal{L}_2=\mathcal{P}(\ch_1(i))$. While it is computationally more complex to check this condition, it allows one to distinguish many more pairs of graphs than the previous two criteria. This is illustrated in the following example. 

\begin{example}
Let $G_1$ and $G_2$ be the graphs pictured in Figure \ref{fig:difParentallyClosedGraphs}. First observe that both $G_1$ and $G_2$ have the out-degree sequence $(2,2,0,0)$ and both graphs contain a transitive triangle so Theorems \ref{outd} and \ref{notri} cannot be applied to distinguish these graphs but Theorem \ref{outd_layer} does apply. 
The set $L = \{l_2, j\}$ is parentally closed in $G_2$ with respect to $i$ since $\pa(L) \cap \neighbors(i) = \{j\} \subseteq L$ and we can see that
\[
|\ch_2(i) \cap L| = 2 > 1 = |\ch_1(i) \cap L|
\]
thus by Theorem \ref{outd_layer}, these two models are distinguishable. 
\end{example}

\begin{figure}
\centering
\begin{minipage}[t]{0.3\textwidth}
\centering
\begin{tikzpicture}
[main_node/.style={draw=none,fill=none,minimum size=1 em, inner sep=1.5pt}]

\node[main_node] (0) at (0, 0) {$j$};
\node[main_node] (1) at (1.5, 0) {$i$};
\node[main_node] (2) at (0, -1.5) {$l_1$};
\node[main_node] (3) at (1.5, -1.5) {$l_2$};

\draw[->, thick] (0) to (1);
\draw[->, thick] (0) to (2);
\draw[->, thick] (1) to (2);
\draw[->, thick] (1) to (3);
\end{tikzpicture}
\caption*{$G_1$}
\end{minipage}
\begin{minipage}[t]{0.3\textwidth}
\centering
\begin{tikzpicture}
[main_node/.style={draw=none,fill=none,minimum size=1 em, inner sep=1.5pt}]

\node[main_node] (0) at (0, 0) {$j$};
\node[main_node] (1) at (1.5, 0) {$i$};
\node[main_node] (2) at (0, -1.5) {$l_1$};
\node[main_node] (3) at (1.5, -1.5) {$l_2$};

\draw[->, thick] (1) to (0);
\draw[->, thick] (0) to (2);
\draw[->, thick] (0) to (3);
\draw[->, thick] (1) to (3);
\end{tikzpicture}
\caption*{$G_2$}
\end{minipage}
\caption{Two graphs which are not distinguished by Theorems \ref{outd} and \ref{notri} but are distinguished by Theorem \ref{outd_layer}.}
\label{fig:difParentallyClosedGraphs}
\end{figure}

The next theorem is purely graph-theoretic, but together with Theorem \ref{outd_layer} can be very powerful for distinguishing graphs. 

\begin{theorem}
\label{thm:difGraphsPC_Set}
Let $G_1 = (V, D_1)$ and $G_2 = (V, D_2)$ be non-complete, simple directed graphs with the same out-degree sequence such that $D_1 \neq D_2$. If one of the graphs is acyclic, then there exists a node $i \in G_k$ and a set $L$ which is parentally closed with respect to $i$ such that $|\ch_{k}(i)\cap L|>|\ch_{3-k}(i)\cap L|$. 
\end{theorem}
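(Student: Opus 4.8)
The plan is to reduce the claim to a purely combinatorial statement about the topological order of the acyclic graph. Assume without loss of generality that $G_1$ is acyclic and fix a topological order of $G_1$, so that $(u,v)\in D_1$ implies that $u$ precedes $v$. Since $D_1\neq D_2$ but the two graphs have the same out-degree at every node, there is at least one \emph{disagreement node}, i.e.\ a node $v$ with $\ch_1(v)\neq \ch_2(v)$; let $i$ be the disagreement node that comes \emph{first} in this topological order. Because $|\ch_1(i)|=|\ch_2(i)|$ and $\ch_1(i)\neq\ch_2(i)$, both $\ch_1(i)\setminus\ch_2(i)$ and $\ch_2(i)\setminus\ch_1(i)$ are nonempty.

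The crux is the following claim: \emph{there is a node $j^\ast\in\ch_2(i)\setminus\ch_1(i)$ that is not adjacent to $i$ in $G_1$.} Suppose not; then every $j\in\ch_2(i)\setminus\ch_1(i)$ is adjacent to $i$ in $G_1$, and since $(i,j)\notin D_1$ this forces $(j,i)\in D_1$, so $j$ precedes $i$ in the topological order. But $(i,j)\in D_2$ and $G_2$ is simple, so $(j,i)\notin D_2$; hence $i\in\ch_1(j)\setminus\ch_2(j)$, making $j$ a disagreement node that precedes $i$, contradicting the choice of $i$. This is exactly where acyclicity of $G_1$ (existence of the topological order, and with it a well-defined first disagreement node) is used.

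With $j^\ast$ in hand, take $L$ to be the \emph{parental closure of $\ch_1(i)$ inside $\neighbors_1(i)$}: the smallest subset of $\neighbors_1(i)$ that contains $\ch_1(i)$ and is parentally closed with respect to $i$ in $G_1$, obtained by iteratively adjoining $\pa_1(\cdot)\cap\neighbors_1(i)$. By construction $L\in\mathcal{L}_i^1$ and $\ch_1(i)\subseteq L$, so $|\ch_1(i)\cap L|=|\ch_1(i)|$. On the other hand $\ch_2(i)\cap L\subseteq\neighbors_1(i)=\ch_1(i)\cup\pa_1(i)$, these two pieces are disjoint since $G_1$ is simple, and one checks that $\ch_2(i)\cap\pa_1(i)\subseteq\ch_2(i)\setminus\ch_1(i)$ (again because $G_1$ is simple); moreover $j^\ast$ lies in $\ch_2(i)\setminus\ch_1(i)$ but not in $\pa_1(i)$, so that containment is strict. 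Hence
\[
|\ch_2(i)\cap L|\;\le\;|\ch_2(i)\cap\ch_1(i)|+|\ch_2(i)\cap\pa_1(i)|\;<\;|\ch_2(i)\cap\ch_1(i)|+|\ch_2(i)\setminus\ch_1(i)|\;=\;|\ch_2(i)|\;=\;|\ch_1(i)|,
\]
so $|\ch_1(i)\cap L|>|\ch_2(i)\cap L|$, which is the asserted inequality with $k=1$ (or $k=2$ if it was $G_2$ that we relabelled as acyclic).

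I expect the only real obstacle to be spotting the two correct choices: taking the topologically \emph{first} disagreement node (so the minimality argument in the claim goes through), and taking $L$ to be the parental closure of $\ch_1(i)$ rather than $\ch_1(i)$ itself, which may fail to be parentally closed precisely because $G_1$ may contain transitive triangles $p\to i$, $p\to j$, $i\to j$. Everything else is bookkeeping with the hypotheses that out-degrees agree node by node and that both graphs are simple; the non-completeness hypothesis is not actually needed for this graph-theoretic statement and is only inherited from its intended use together with Theorem~\ref{outd_layer}.
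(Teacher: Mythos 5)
Your proof is correct, and it shares the paper's central device --- order the acyclic graph topologically and work at the \emph{first} node $i$ where the children sets disagree, using minimality to control how $\ch_2(i)$ can meet $\pa_1(i)$ --- but it diverges in the choice of the parentally closed set. The paper picks a single $j\in\ch_1(i)\setminus\ch_2(i)$ and takes $L=\an_1(j)\cap\neighbors_1(i)$, then proves the containment $\ch_2(i)\cap L\subseteq\ch_1(i)\cap L$ with $j$ witnessing strictness; you instead take $L$ to be the parental closure of all of $\ch_1(i)$ inside $\neighbors_1(i)$ and finish by counting. Both constructions are valid, and your observation that $\ch_1(i)$ itself need not be parentally closed (because of transitive triangles) is exactly the point that forces some closure operation in either approach. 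One remark: your intermediate claim is weaker than what your own minimality argument already yields. The same reasoning you use to produce $j^\ast$ shows outright that $\ch_2(i)\cap\pa_1(i)=\emptyset$ --- any $k\in\ch_2(i)\cap\pa_1(i)$ has $i\in\ch_1(k)\setminus\ch_2(k)$ and precedes $i$, contradicting minimality --- after which $\ch_2(i)\cap L\subseteq\ch_2(i)\cap\ch_1(i)\subsetneq\ch_1(i)=\ch_1(i)\cap L$ and the counting step becomes immediate; this is essentially how the paper closes its argument. You are also right that non-completeness plays no role in this purely graph-theoretic statement and is carried along only for the application via Theorem~\ref{outd_layer}.
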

\begin{proof}
Without loss of generality assume $G_1$ is a directed acyclic graph which is topologically ordered and let $<$ be a linear extension of the topological ordering. Let $i$ be the minimal node with respect to $<$ such that $\ch_1(i) \neq \ch_2(i)$ and let $j \in \ch_1(i) \setminus \ch_2(i)$. Note that such a node is guaranteed to exist since the two graphs have the same out-degree sequence, meaning that $|\ch_1(i)| = |\ch_2(i)|$. Then consider the set $L = \an_1(j) \cap \neighbors_1(i)$ which is parentally closed since if $k \in \pa_1(L) \cap \neighbors_1(i)$ then there exists some $k' \in L$ such that $k \in \pa(k')$ which immediately implies $k \in \an_1(j)$ and thus $k \in L$. 

We claim that $|\ch_{1}(i)\cap L|>|\ch_{2}(i)\cap L|$. We do this by showing that
\[
\ch_2(i) \cap L \subseteq \ch_1(i) \cap L
\]
which will immediately prove the claim since $j \in \ch_{1}(i)\cap L$ but cannot be in $\ch_{2}(i)\cap L$ by construction. So let $k \in \ch_2(i) \cap L = \ch_2(i) \cap \an_1(j) \cap \neighbors_1(i)$. This means $k \in \neighbors_1(i)$ so either $k \in \pa_1(i)$ or $k \in \ch_1(i)$. If $k \in ch_1(i)$ then we are done so suppose that $k \in \pa_1(i)$. Then $i \in \ch_1(k)$ but $i \notin \ch_2(k)$ since $k \in \ch_2(i)$ and $G_2$ is simple. This implies $\ch_1(k) \neq \ch_2(k)$ which is a contradiction since $k < i$ so by assumption $\ch_1(k) = \ch_2(k)$. Thus we see that $\ch_2(i) \cap L \subseteq \ch_1(i) \cap L$
and so the result follows. 
\end{proof}

\begin{corollary}
Let $G_1$ and $G_2$ be distinct digraphs with the same out-degree sequence such that at least one is acyclic. Then $G_1$ and $G_2$ have different Jacobian matroids. 
\end{corollary}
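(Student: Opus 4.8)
The plan is to read off this corollary as an immediate consequence of Theorem~\ref{thm:difGraphsPC_Set} together with Theorem~\ref{outd_layer}, after first disposing of a boundary case. Indeed, Theorem~\ref{thm:difGraphsPC_Set} says precisely that for non-complete simple digraphs with a common out-degree sequence, with $D_1 \neq D_2$, and one of which is acyclic, there is a node $i$ and a set $L$ parentally closed with respect to $i$ satisfying $|\ch_k(i)\cap L| > |\ch_{3-k}(i)\cap L|$ for the appropriate $k \in \{1,2\}$; and Theorem~\ref{outd_layer} turns exactly this combinatorial datum into the assertion that the two Jacobian matroids differ. So the only real task is to verify that the hypotheses of these two theorems --- in particular that \emph{both} graphs are non-complete --- are in force under the weaker hypotheses of the corollary.

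First I would rule out the complete case. A common out-degree sequence forces $|D_1| = |D_2|$, and since the graphs are simple a digraph on $|V|$ vertices has at most $\binom{|V|}{2}$ edges, with equality exactly when every unordered pair of vertices is adjacent. Hence if one of $G_1, G_2$ is complete then both are, and both are tournaments on $V$. The acyclic one is then a transitive tournament, whose out-degree (score) sequence is $0, 1, \dots, |V|-1$ with all entries distinct; the other tournament has the same score sequence and is therefore also transitive, and a transitive tournament is determined by the linear order read off from its scores, so $G_1 = G_2$, contradicting $D_1 \neq D_2$. Thus neither $G_1$ nor $G_2$ is complete, and (by the paper's standing assumption) both are simple.

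Now both graphs are non-complete simple digraphs sharing an out-degree sequence, with $D_1 \neq D_2$ and one of them acyclic, so Theorem~\ref{thm:difGraphsPC_Set} applies and supplies a node $i$ and a set $L$ parentally closed with respect to $i$ with $|\ch_k(i)\cap L| > |\ch_{3-k}(i)\cap L|$. Feeding $i$ and $L$ into Theorem~\ref{outd_layer} yields that $G_1$ and $G_2$ have different Jacobian matroids, which is the claim. I do not expect a genuine obstacle here: both supporting theorems are already established, and the only point requiring care is the tournament bookkeeping in the complete-graph boundary case --- which could alternatively be avoided altogether by simply phrasing the corollary for non-complete graphs, as is done elsewhere in the paper.
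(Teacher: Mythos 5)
Your proof is correct and follows the same route as the paper, which justifies this corollary in one line as the combination of Theorem~\ref{thm:difGraphsPC_Set} and Theorem~\ref{outd_layer}. Your additional tournament argument ruling out the complete case is a genuine (if minor) improvement: both supporting theorems assume non-completeness, the corollary as stated does not, and the paper never addresses this boundary case, so your observation that two distinct simple complete digraphs with identical out-degrees cannot both exist when one is acyclic is exactly the missing bookkeeping.
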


This corollary immediately follows from combining Theorems \ref{outd_layer} and \ref{thm:difGraphsPC_Set}. It also immediately provides an alternate proof for the identifiability of homoscedastic DAG models which was established in \cite{ideneqPeters,eqvar/biomet/asz049} while also showing that any cyclic graph is always distinguishable from a DAG. We end this section with the following corollary which summarizes the identifiability results given by the previous theorems. 

\begin{corollary}
 Let $\mathcal{G}$ be the collection of non-complete simple directed graphs. If the collection satisfies one of the following conditions, then the discrete parameter $G$ of the family $\{M_G\}_{G \in \mathcal{G}}$ of homoscedastic linear structural equation models is generically identifiable:
\begin{enumerate}[(i)]
    \item Every graph $G\in\mathcal{G}$ has a unique out-degree sequence;
    \item Every graph $G\in\mathcal{G}$ is transitive triangle-free;
    \item For every pair of graphs $G_1, G_2 \in\mathcal{G}$, there exists a node $i$ and a parentally closed set $L$ such that $|\ch_1(i) \cap L| \neq |\ch_2(i) \cap L|$. 
\end{enumerate}   
\end{corollary}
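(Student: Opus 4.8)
The plan is to reduce the assertion, under each of the three conditions, to the single claim that any two distinct graphs in $\mathcal{G}$ have different Jacobian matroids, and then to feed this into the matroid criterion of Section~\ref{sec2.3}. Fix distinct $G_1 = (V, D_1)$ and $G_2 = (V, D_2)$ in $\mathcal{G}$, with models $M_1, M_2 \subseteq PD_V$; both are irreducible since they are parameterized. If $|D_1| \neq |D_2|$, then by Theorem~\ref{dim} we have $\dim M_1 = |D_1| + 1 \neq |D_2| + 1 = \dim M_2$, so $\dim(M_1 \cap M_2) \leq \min(\dim M_1, \dim M_2) < \max(\dim M_1, \dim M_2)$ and generic identifiability holds for this pair directly from the definition. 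Hence I may assume $|D_1| = |D_2|$, so that $\dim M_1 = \dim M_2$ by Theorem~\ref{dim}.

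Under this assumption, suppose we have shown that the Jacobian matroids of $G_1$ and $G_2$ differ. By Proposition~\ref{jacmat} these matroids coincide with the coordinate projection matroids $\mathcal{M}(\overline{M_1})$ and $\mathcal{M}(\overline{M_2})$, so there is a coordinate subset $S$ lying in exactly one of them. Since $\dim M_1 = \dim M_2$, the roles of $M_1$ and $M_2$ in Proposition~\ref{coorid} are symmetric (as noted after Proposition~\ref{jacmat}), and that proposition yields $\dim(M_1 \cap M_2) < \dim M_1 = \dim M_2 = \max(\dim M_1, \dim M_2)$, i.e.\ generic identifiability of this pair. So in all three cases it suffices to show that $G_1$ and $G_2$ have different Jacobian matroids, after which ranging over all pairs gives generic identifiability of $\{M_G\}_{G \in \mathcal{G}}$.

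It then remains to verify the matroid difference in each case, which is now essentially a citation. For (iii): after relabeling so that the node $i$ and the parentally closed set $L$ satisfy $|\ch_1(i) \cap L| > |\ch_2(i) \cap L|$ with $L$ parentally closed with respect to $i$ in $G_1$, Theorem~\ref{outd_layer} applies directly, using that $G_1, G_2$ are non-complete. For (ii): $G_1$ and $G_2$ are distinct, transitive triangle-free, non-complete, simple directed graphs, so Theorem~\ref{notri} gives the conclusion. For (i): since $G_1 \neq G_2$ have distinct out-degree sequences and $|D_1| = |D_2|$, the two multisets of out-degrees differ but have equal sum; were the node-wise out-degree functions equal these multisets would agree, so there is a node $i$ with $|\ch_1(i)| \neq |\ch_2(i)|$, and Theorem~\ref{outd} applies. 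The argument is bookkeeping on top of Theorems~\ref{outd}, \ref{notri}, and~\ref{outd_layer}; the only point needing a word of care is the elementary observation in case (i) that, once $|D_1| = |D_2|$, distinct out-degree sequences force a node of differing out-degree, together with confirming that the standing hypotheses on $\mathcal{G}$ (simple, non-complete) match the hypotheses of each cited theorem. I do not expect a genuine obstacle here.
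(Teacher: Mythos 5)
Your proposal is correct and follows essentially the same route the paper intends: the corollary is stated as a summary of Theorems \ref{outd}, \ref{notri}, and \ref{outd_layer}, combined with the matroid-to-identifiability reduction of Propositions \ref{coorid} and \ref{jacmat} (including the equal-dimension symmetry remark and the separate handling of $|D_1|\neq|D_2|$ via Theorem \ref{dim}). The bookkeeping you supply, in particular the observation in case (i) that differing out-degree multisets force a node of differing out-degree, is exactly the intended argument.
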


\section{Conjectures and Computational Study of Simple Graphs}
\label{sec:Conjectures}
In this section we outline a few conjectures concerning these models and provide some supporting computational evidence. In particular, we conjecture that Theorem \ref{outd_layer} can be used to distinguish many more non-complete graphs. Moreover, we provide evidence that some complete graphs actually have the same matroid and thus this technique could never be used to distinguish the underlying statistical models.

Our first conjecture is purely graph-theoretic and is a direct generalization of Theorem \ref{thm:difGraphsPC_Set}. A proof of the first conjecture combined with Theorem \ref{outd_layer} would be one step toward a proof of the second conjecture.  

\begin{conjecture}
\label{conj:difGraphPCSets}
Let $G_1 = (V, D_1)$ and $G_2 = (D, V_2)$ be non-complete, simple directed graphs with the same out-degree sequence but which do not have the same strongly connected components. Then there exists a node $i \in G_k$ and a set $L$ which is parentally closed with respect to $i$ such that $|\ch_{k}(i)\cap L|>|\ch_{3-k}(i)\cap L|$. 
\end{conjecture}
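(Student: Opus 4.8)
The plan is to adapt the proof of Theorem~\ref{thm:difGraphsPC_Set}, replacing the topological order of an acyclic graph by a topological order of the condensation of $G_1$ (the DAG whose vertices are the strongly connected components of $G_1$).

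First I would handle the reductions. The partitions of $V$ into SCCs of $G_1$ and into SCCs of $G_2$ are distinct, and since distinct partitions of a finite set cannot each refine the other, after possibly swapping the roles of $G_1$ and $G_2$ some SCC of $G_1$ is not contained in any single SCC of $G_2$; call such an SCC \emph{split} by $G_2$. Call a node $v$ a \emph{difference node} if $\ch_1(v)\neq\ch_2(v)$; since the out-degree sequences agree we have $|\ch_1(v)|=|\ch_2(v)|$ for every $v$, so at each difference node both $\ch_1(v)\setminus\ch_2(v)$ and $\ch_2(v)\setminus\ch_1(v)$ are nonempty. If an SCC $S$ of $G_1$ contains no difference node, then for $v,w\in S$ we have $v\to w\in D_1$ iff $v\to w\in D_2$, so the induced subgraphs of $G_1$ and $G_2$ on $S$ coincide; hence $S$ is strongly connected in $G_2$ as well and is not split. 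Consequently a split SCC of $G_1$ contains a difference node, so the collection of SCCs of $G_1$ containing a difference node is nonempty. Let $S_0$ be one such SCC chosen so that no strict ancestor SCC of $S_0$ in $G_1$ contains a difference node.

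Now I would try to run the argument of Theorem~\ref{thm:difGraphsPC_Set} in this setting. Pick a difference node $i\in S_0$ and $j\in\ch_1(i)\setminus\ch_2(i)$, and set $L:=\bigl(\an_1(j)\cup\{j\}\bigr)\cap\neighbors_1(i)$. The verification that $L$ is parentally closed with respect to $i$ uses only that $\an_1(j)\cup\{j\}$ is closed under $G_1$-parents, and not acyclicity, so it goes through; moreover $j\in\ch_1(i)\cap L$ while $j\notin\ch_2(i)$. It therefore suffices to show $\ch_2(i)\cap L\subseteq\ch_1(i)\cap L$, since this forces $|\ch_1(i)\cap L|>|\ch_2(i)\cap L|$. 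Take $k\in\ch_2(i)\cap L$; since $k\in\neighbors_1(i)$, either $k\in\ch_1(i)$, and then we are done, or $k\in\pa_1(i)$. In the latter case $i\in\ch_1(k)$ but $i\notin\ch_2(k)$ by simplicity of $G_2$, so $k$ is a difference node, and the edge $k\to i$ of $G_1$ forces the SCC of $k$ to be $S_0$ or a strict ancestor of $S_0$. If it is a strict ancestor, the choice of $S_0$ is contradicted, exactly as in the acyclic case.

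The genuinely new situation — and the step I expect to be the main obstacle — is that $k$ may lie in $S_0$ itself, giving an edge $k\to i$ of $G_1$ whose reverse $i\to k$ is an edge of $G_2$ with both endpoints in the single $G_1$-component $S_0$; this cannot be removed by re-choosing $i$, because applying the same analysis at $k$ with the child $i\in\ch_1(k)\setminus\ch_2(k)$ may again produce such a reversed pair. My plan to close this gap is a chasing argument: if for \emph{every} difference node $v\in S_0$ and \emph{every} admissible $j$ the desired inequality failed, then for each $v$ there would be a difference node $w\in S_0$ with $w\to v$ in $G_1$ and $v\to w$ in $G_2$; iterating and invoking finiteness yields nodes $v_1,\dots,v_r\in S_0$ with a directed cycle $v_1\to v_2\to\cdots\to v_r\to v_1$ in $G_2$ and the reversed cycle in $G_1$, so $v_1,\dots,v_r$ all lie in one SCC of $G_2$. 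One would then derive a contradiction from the remaining hypotheses: every strict ancestor SCC of $S_0$ induces identical subgraphs of $G_1$ and $G_2$, the out-degree sequences agree, and some SCC of $G_1$ is split by $G_2$; the intuition is that the reversed edges force the $G_2$-SCC of the $v_t$ to be strictly larger than anything compatible with the out-degree count once the ancestors are pinned down. Making this precise — identifying the exact combinatorial invariant that is violated — is the real work and presumably the reason the statement is left open; a cleaner alternative I would try first is to choose the pair $(i,j)$ so that $j$ is as close to $i$ as possible along directed $G_1$-paths, or to restrict attention from the outset to a single $G_2$-SCC contained in $S_0$, which may prevent the reversed-edge recursion from arising at all.
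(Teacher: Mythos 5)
This statement is left as a conjecture in the paper; the authors offer no proof, only computational verification for graphs on $4$, $5$, and (partially) $6$ nodes. So there is no argument of theirs to compare yours against, and your proposal must stand on its own. It does not: as you yourself acknowledge, the argument is incomplete at exactly the point where the conjecture goes beyond Theorem~\ref{thm:difGraphsPC_Set}. Your reductions are sound (distinct SCC partitions force a split SCC, split SCCs contain difference nodes, and the minimal-ancestor choice of $S_0$ together with $L=(\an_1(j)\cup\{j\})\cap\neighbors_1(i)$ reproduces the acyclic argument verbatim when the offending parent $k$ lies in a strict ancestor SCC), but the case $k\in S_0$ is the entire content of the conjecture, and your ``chasing argument'' does not close it.

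Concretely, the chase produces difference nodes $v_1,\dots,v_r\in S_0$ forming a directed cycle in $G_2$ whose reversal is a cycle in $G_1$, and you hope to contradict the hypotheses from this. But reversing a directed cycle changes no vertex's out-degree, so no conflict with the equal out-degree sequences arises locally; and the hypothesis that the SCC partitions differ is global --- the split SCC witnessing it need not be $S_0$ at all (your own reduction only guarantees $S_0$ contains a difference node, not that $S_0$ is split), so nothing pins the reversed cycle against the SCC hypothesis. The paper's Figure~\ref{fig:notDistByPC} example shows that within-SCC reversals can genuinely defeat every parentally closed set when the SCCs agree, which is precisely why the authors added the SCC hypothesis and precisely why the residual case cannot be dispatched by a local counting argument: whatever invariant is violated must couple the reversed cycle in $S_0$ to the SCC discrepancy elsewhere in the graph. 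Your fallback suggestions (choosing $j$ at minimal $G_1$-distance from $i$, or working inside a single $G_2$-SCC of $S_0$) are plausible heuristics but are not carried out, and neither obviously prevents the reversed-edge recursion. The proposal is therefore a reasonable plan of attack on an open problem, not a proof.
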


\begin{conjecture}
Let $\mathcal{G}$ be any family of non-complete graphs, then graph parameter $G$ of the family $\{M_G\}_{G \in \mathcal{G}}$ of homoscedastic linear structural equation models is generically identifiable. 
\end{conjecture}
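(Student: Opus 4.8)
The plan is to reduce the statement, via Propositions~\ref{coorid} and~\ref{jacmat}, to the purely matroidal assertion that any two distinct non-complete simple directed graphs on a common node set $V$ have different Jacobian matroids. First I would dispose of the trivial case: by Theorem~\ref{dim} the model $M_G$ has dimension $|D|+1$, so if $|D_1|\neq|D_2|$ then $\dim M_{G_1}\neq\dim M_{G_2}$ and the pair is generically identifiable directly from the definition. Hence we may assume $|D_1|=|D_2|$; then Proposition~\ref{jacmat} identifies $\mathcal{M}(\overline{M_G})$ with the column matroid of $J(\psi_G)$, and by Proposition~\ref{coorid} it suffices to produce a single column set $S$ of size $|D_1|+1$ that is independent in exactly one of $\mathcal{M}(\psi_{G_1})$ and $\mathcal{M}(\psi_{G_2})$ (when the dimensions are equal the $\min$ and $\max$ versions of the identifiability inequality coincide).

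Next I would run the case split of Section~\ref{sec:DistinguishingWithMatroids}. If the out-degree sequences of $G_1$ and $G_2$ differ, Theorem~\ref{outd} already finishes the argument. If they agree, the task reduces to verifying the hypothesis of Theorem~\ref{outd_layer}: exhibit a node $i\in V$ and a parentally closed set $L\in\mathcal{L}_i^k$ with $|\ch_k(i)\cap L|>|\ch_{3-k}(i)\cap L|$ for some $k\in\{1,2\}$. Thus the entire analytic content is already supplied by the theorems of the paper, and what remains is a purely combinatorial existence statement about $i$ and $L$.

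To establish that combinatorial core I would first prove Conjecture~\ref{conj:difGraphPCSets}, which covers the subcase where $G_1$ and $G_2$ do not have the same partition of $V$ into strongly connected components. The approach should generalize the proof of Theorem~\ref{thm:difGraphsPC_Set}: replace the topological order of an acyclic graph with a topological order of the condensation (the DAG of strongly connected components), let $i$ be a node with $\ch_1(i)\neq\ch_2(i)$ whose component is minimal in the condensation order among all components containing such a node, pick $j\in\ch_1(i)\setminus\ch_2(i)$, and take $L=\an_1(j)\cap\neighbors_1(i)$, which is parentally closed by the same ancestral closure argument. The place where cyclicity bites is the inclusion $\ch_2(i)\cap L\subseteq\ch_1(i)\cap L$: a neighbour $k$ of $i$ lying in $\pa_1(i)$ can no longer be ruled out by minimality alone, so one must use that $k\in\an_1(j)$ together with the minimality of $i$'s component to force $k$ into a strictly earlier component, recovering the contradiction $\ch_1(k)\neq\ch_2(k)$.

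The genuinely open piece, which I expect to be the main obstacle, is the residual case in which $G_1$ and $G_2$ share the same out-degree sequence and the same strongly connected components yet $D_1\neq D_2$; the discrepancy then lives inside one nontrivial strongly connected component, where ancestral and minimality arguments lose all purchase. For such pairs the parentally closed set criterion of Theorem~\ref{outd_layer} may itself be insufficient --- this mirrors the observation in Section~\ref{sec:Conjectures} that the Jacobian matroid fails to separate certain complete graphs --- so closing this case would likely demand either a strictly finer matroidal or algebraic invariant adapted to strongly connected graphs, or a direct geometric argument showing that $\overline{M_{G_1}}=\overline{M_{G_2}}$ for same-dimensional models forces $D_1=D_2$ whenever neither graph is complete. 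Accordingly I would treat Conjecture~\ref{conj:difGraphPCSets} together with Theorem~\ref{outd_layer} as the first milestone, and isolate the common-component case as the \emph{crux} requiring a new idea.
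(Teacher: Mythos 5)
The statement you are addressing is presented in the paper only as a conjecture; the paper offers no proof, and your proposal does not supply one either. You are candid about this, and your reduction --- dispose of unequal edge counts via Theorem~\ref{dim}, unequal out-degree sequences via Theorem~\ref{outd}, and then try to satisfy the hypothesis of Theorem~\ref{outd_layer} by exhibiting a node $i$ and a parentally closed set $L$ --- is precisely the program the paper itself lays out in Section~\ref{sec:Conjectures}. So the proposal is best read as a restatement of that program together with an attempted proof of Conjecture~\ref{conj:difGraphPCSets}, not as a proof of the conjecture at hand.

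There are two concrete gaps. First, your condensation-based argument for Conjecture~\ref{conj:difGraphPCSets} does not go through as written: after choosing $i$ in a component that is minimal in the condensation order among those containing a node with $\ch_1(i)\neq\ch_2(i)$, and picking $j\in\ch_1(i)\setminus\ch_2(i)$, the problematic neighbour $k\in\pa_1(i)\cap\ch_2(i)\cap\an_1(j)$ may lie in the \emph{same} strongly connected component as $i$ (and as $j$), in which case $k\in\an_1(j)$ holds automatically and the minimality of the component gives no control over whether $\ch_1(k)=\ch_2(k)$; your claim that $k$ is forced into a strictly earlier component is unjustified. Second, and more fundamentally, even a complete proof of Conjecture~\ref{conj:difGraphPCSets} cannot close the conjecture: the example of Figure~\ref{fig:notDistByPC} exhibits two six-node graphs with identical out-degree sequences and identical strongly connected components for which \emph{no} node and parentally closed set satisfy the hypothesis of Theorem~\ref{outd_layer}, yet whose Jacobian matroids differ only by direct computation. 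So the residual case you isolate as the crux provably cannot be handled by the parentally closed set criterion, and a genuinely new invariant or argument is required there. Your proposal correctly locates the obstruction but leaves the conjecture open.
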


We have certified that Conjecture \ref{conj:difGraphPCSets} is true for all graphs with 4 and 5 nodes. In fact, for these cases it is true without the assumption that they have the same strongly connected components. For graphs with 6 nodes, we have tested 100,000 pairs of graphs for each possible out-degree sequence (or all possible pairs if there are less than 100,000). In each case Conjecture \ref{conj:difGraphPCSets} holds. The code for this is written in \texttt{SageMath} and can be found at:

\begin{center}
        \url{https://mathrepo.mis.mpg.de/cyclic-sem-identifiability}.
\end{center}

While Conjecture \ref{conj:difGraphPCSets} holds for all pairs of 4 and 5 node graphs, the assumption that the graphs do not have the exact same strongly connected components is necessary for larger graphs. This next example demonstrates this. It would be interesting to find a new graphical criterion which could be used to distinguish the matroids in this case. 

\begin{example}
Let $G_1$ and $G_2$ be the graphs pictured in Figure \ref{fig:notDistByPC}. Observe that they both have the same out-degree sequence which is $(3,2,2,2,2,0)$ and the same strongly connected components which consist of the vertices $\{1,2,3,4,5\}$ and $\{6\}$. 

With respect to each node, they both have the same nontrivial parentally closed sets which are
\begin{align*}
&1: \{6\}, \{2,3,4,5\}, &2: \{1,3,4,5\}, & &3: \{1,2,4,5\}, \\
&4: \{1,2,3,5\},  &5: \{1,2,3,4\}, & &6: \{1\}.
\end{align*}
One can see that for each node $i$ and parentally closed set $L$ above, the cardinality of the intersection $|\ch_k(i) \cap L|$ is the same. However, one can verify by direct computation that their associated Jacobian matroids are different and thus the models are still distinguishable. 
\end{example}

\begin{figure}
\centering
\begin{minipage}[t]{0.4\textwidth}
\centering
\begin{tikzpicture}
[main_node/.style={circle,fill=white,draw,minimum size=1 em,inner sep=2pt]}]

\node[main_node] (4) at (0, 0) {$4$};
\node[main_node] (5) at (5/2, 0) {$5$};
\node[main_node] (3) at (6.54/2, 4.75/2) {$3$};
\node[main_node] (2) at (2.50/2, 7.69/2) {$2$};
\node[main_node] (1) at (-1.54/2, 4.75/2) {$1$};
\node[main_node] (6) at (-1.54/2, 4.75/2 + 1) {$6$};

\draw [->, thick] (1) to (2);
\draw [->, thick] (1) to (4);
\draw [->, thick] (1) to (6);
\draw [->, thick] (2) to (4);
\draw [->, thick] (2) to (5);
\draw [->, thick] (3) to (1);
\draw [->, thick] (3) to (2);
\draw [->, thick] (4) to (3);
\draw [->, thick] (4) to (5);
\draw [->, thick] (5) to (1);
\draw [->, thick] (5) to (3);
\end{tikzpicture}
\end{minipage}
\begin{minipage}[t]{0.4\textwidth}
\centering
\begin{tikzpicture}
[main_node/.style={circle,fill=white,draw,minimum size=1 em,inner sep=2pt]}]

\node[main_node] (4) at (0, 0) {$4$};
\node[main_node] (5) at (5/2, 0) {$5$};
\node[main_node] (3) at (6.54/2, 4.75/2) {$3$};
\node[main_node] (2) at (2.50/2, 7.69/2) {$2$};
\node[main_node] (1) at (-1.54/2, 4.75/2) {$1$};
\node[main_node] (6) at (-1.54/2, 4.75/2 + 1) {$6$};

\draw [->, thick] (1) to (3);
\draw [->, thick] (1) to (5);
\draw [->, thick] (1) to (6);
\draw [->, thick] (2) to (1);
\draw [->, thick] (2) to (5);
\draw [->, thick] (3) to (2);
\draw [->, thick] (3) to (4);
\draw [->, thick] (4) to (1);
\draw [->, thick] (4) to (2);
\draw [->, thick] (5) to (3);
\draw [->, thick] (5) to (4);

\end{tikzpicture}
\end{minipage}
\caption{Two graphs $G_1$ and $G_2$ have the same out-degree sequence and same strongly connected components. There is no parentally closed set which can be used to distinguish them however they do have different matroids. }
\label{fig:notDistByPC}
\end{figure}
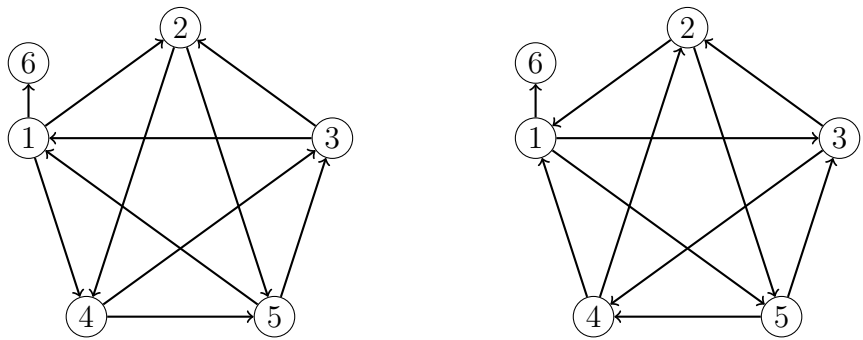

One assumption that all of our theorems and even our previous conjectures make is that the graphs involved are not complete. This is because many complete graphs actually have the same matroid, even when the graphs involved are DAGs. We end with our conjecture on when this happens for our family of models. 

\begin{conjecture}
Let $G_1$ and $G_2$ be complete digraphs on $p$ nodes with all of the same edge directions except on the edge between $p-1$ and $p$. Then $G_1$ and $G_2$ have the same Jacobian matroid. 
\end{conjecture}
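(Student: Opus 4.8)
The plan is to prove the conjecture directly at the level of rank functions. By Proposition~\ref{jacmat}, two graphs on the same node set have the same Jacobian matroid precisely when $\rank(J^{(1)}_S) = \rank(J^{(2)}_S)$ for every subset $S$ of the common ground set $E = \{K_{ij} : i \le j\}$, and by Theorem~\ref{dim} both matroids have rank $\binom{p}{2}+1$. Write $a = p-1$ and $b = p$, let $D_0$ be the set of $\binom{p}{2}-1$ edges shared by $G_1$ and $G_2$, and let $G_0 = (V, D_0)$ be the complete graph with the $\{a,b\}$ edge deleted. The entire argument will be organized around $G_0$, whose Jacobian matroid $\mathcal{M}(\psi_{G_0})$ is common to both problems and whose rank function I denote by $r_0$.

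The key step is an explicit decomposition of the two parameterizations relative to $G_0$. Writing $B_0 = I - \Lambda_0$ for the common part of $I - \Lambda$, $c_{ij} = s(B_0 B_0^\top)_{ij}$ for the coordinate functions of $M_{G_0}$, and $u = B_0 e_b$, $v = B_0 e_a$ for the $b$-th and $a$-th columns of $B_0$, the identity $K = s(I-\Lambda)(I-\Lambda)^\top$ applied to $B^{(1)} = B_0 - \lambda_{ab}e_a e_b^\top$ and $B^{(2)} = B_0 - \lambda_{ba}e_b e_a^\top$ yields
\[
\begin{aligned}
K^{(1)}_{ij} &= c_{ij} - s\lambda_{ab}\bigl(u_i\delta_{ja} + \delta_{ia}u_j\bigr) + s\lambda_{ab}^2\,\delta_{ia}\delta_{ja},\\
K^{(2)}_{ij} &= c_{ij} - s\lambda_{ba}\bigl(v_i\delta_{jb} + \delta_{ib}v_j\bigr) + s\lambda_{ba}^2\,\delta_{ib}\delta_{jb}.
\end{aligned}
\]
Since $(a,b),(b,a)\notin D_0$ one has $u_a = v_b = 0$ and $u_b = v_a = 1$. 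The upshot is that in $G_1$ the flip weight $\lambda_{ab}$ deforms exactly the coordinates $K_{ai}$ in the $a$-th row and column, leaving every coordinate not involving $a$ equal to the $G_0$-function $c_{ij}$; symmetrically, in $G_2$ the weight $\lambda_{ba}$ deforms exactly the coordinates involving $b$. The two deformations are dual under $a\leftrightarrow b$, $u\leftrightarrow v$, and I will exploit this symmetry.

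Using this decomposition I would compute each rank function by the fibration $\pi_S(M_{G_k}) \to \pi_{S^{\overline{\bullet}}}(M_{G_0})$ obtained by forgetting the deformed coordinates. For $G_1$, let $S^{\bar a}\subseteq S$ collect the coordinates not involving $a$ (these are genuine $G_0$-coordinates) and $S^{a} = S\setminus S^{\bar a}$. Because the deformed coordinates depend on the single extra parameter $\lambda_{ab}$, the generic fibre has dimension $f_1(S)\in\{0,1\}$ and
\[
\rank_1(S) = r_0(S^{\bar a}) + f_1(S), \qquad \rank_2(S) = r_0(S^{\bar b}) + f_2(S),
\]
where $f_1(S)=1$ exactly when the $a$-involving coordinates of $S$ still vary with $\lambda_{ab}$ after conditioning on the values of the $G_0$-coordinates in $S^{\bar a}$ (and symmetrically for $f_2$). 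The conjecture is then equivalent to the combinatorial identity $r_0(S^{\bar a}) + f_1(S) = r_0(S^{\bar b}) + f_2(S)$ for all $S$. The plan is to characterise the event $f_k(S)=1$ purely in terms of $r_0$ and the supports of the deformation vectors $u,v$ — concretely, $f_1(S)=1$ iff adjoining the $\partial_{\lambda_{ab}}$ direction raises the rank of $J^{(1)}_S$ — and then to verify the identity by a basis-exchange argument, trading the $a$-coordinates counted on the left against the $b$-coordinates counted on the right.

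The main obstacle is precisely this reconciliation: $S^{\bar a}$ and $S^{\bar b}$ are \emph{different} subsets of $E$ (they agree on the core coordinates involving neither $a$ nor $b$, but $S^{\bar a}$ contains the $b$-coordinates of $S$ while $S^{\bar b}$ contains its $a$-coordinates), so the two occurrences of $r_0$ are evaluated at different sets and only the fibre terms $f_1,f_2$ can compensate the difference. Making this compensation exact is where the specific structure of $G_0$ must enter: the facts that $u$ is the $b$-column and $v$ the $a$-column of the \emph{same} matrix $B_0$, and that flipping $\{a,b\}$ leaves the common-children structure of the pair $\{a,b\}$ with respect to all other nodes unchanged, are what I expect to force $f_1$ and $f_2$ to absorb the discrepancy between $r_0(S^{\bar a})$ and $r_0(S^{\bar b})$. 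I would first settle the case $K_{ab}\notin S$, where the deformations are linear in the flip weight and $u,v$ enter only through their non-pivot entries, and then treat $K_{ab}\in S$ together with the quadratic diagonal terms $s\lambda_{ab}^2$ and $s\lambda_{ba}^2$ separately, since these are the two places where the naive symmetry between the parameterizations breaks and a genuine statement about $\mathcal{M}(\psi_{G_0})$ is required.
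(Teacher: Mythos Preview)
First, note that this statement is a \emph{conjecture} in the paper; the paper offers no proof and only reports computational verification for complete digraphs with up to six nodes. So there is no paper proof to compare against, and any complete argument here would be genuinely new.

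Your decomposition $K^{(k)} = sB_0B_0^\top + (\text{rank-one deformation})$ is correct and is a natural starting point, but the rank formula
\[
\rank_k(S) \;=\; r_0\bigl(S^{\overline{\,\bullet\,}}\bigr) + f_k(S),\qquad f_k(S)\in\{0,1\},
\]
is false as stated, and this breaks the strategy at its core. The error is the claim that the generic fibre of $\pi_S(M_{G_1}) \to \pi_{S^{\bar a}}(M_{G_0})$ has dimension at most one ``because the deformed coordinates depend on the single extra parameter $\lambda_{ab}$.'' Fixing the values of the $S^{\bar a}$-coordinates does \emph{not} fix the $G_0$-parameters $(\Lambda_0,s)$; it only confines them to a subvariety of codimension $r_0(S^{\bar a})$. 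The remaining $G_0$-parameters still feed into the $a$-coordinates through the functions $c_{ai}$ and through the entries $u_i$ of $B_0e_b$, so the fibre can have dimension much larger than one. Concretely, take $p=3$, $a=2$, $b=3$, $D_0=\{(1,2),(1,3)\}$, and $S=\{K_{12},K_{22},K_{23}\}$. Every element of $S$ involves node $a=2$, so $S^{\bar a}=\emptyset$ and your formula would give $\rank_1(S)\le 1$; but a direct computation of $J^{(1)}_S$ shows $\rank_1(S)=3$.

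The reduction you are aiming for therefore cannot take the form ``$G_0$-rank plus a $\{0,1\}$ correction.'' If the conjecture is true, the mechanism equating $\rank_1(S)$ and $\rank_2(S)$ must be more subtle than a one-dimensional fibration over a common base, and the obstacle you flag at the end --- reconciling $r_0(S^{\bar a})$ with $r_0(S^{\bar b})$ --- is not merely the hard part of the argument but a symptom that the decomposition you chose does not isolate the contribution of the flipped edge in the way you need.
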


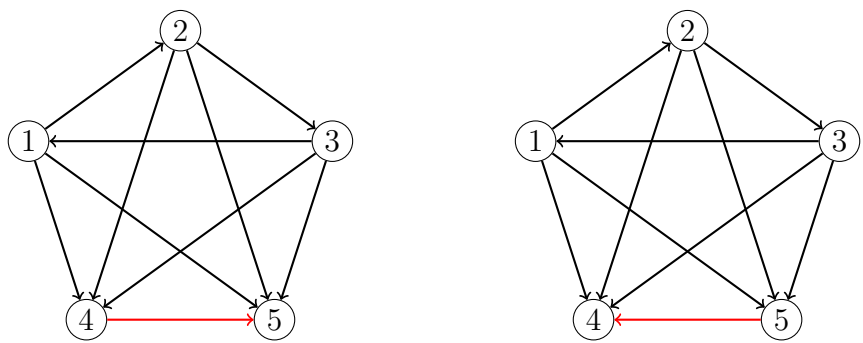
\begin{figure}
\centering
\begin{minipage}[t]{0.4\textwidth}
\centering
\begin{tikzpicture}
[main_node/.style={circle,fill=white,draw,minimum size=1 em,inner sep=2pt]}]

\node[main_node] (4) at (0, 0) {$4$};
\node[main_node] (5) at (5/2, 0) {$5$};
\node[main_node] (3) at (6.54/2, 4.75/2) {$3$};
\node[main_node] (2) at (2.50/2, 7.69/2) {$2$};
\node[main_node] (1) at (-1.54/2, 4.75/2) {$1$};

\draw [->, thick] (1) to (2);
\draw [->, thick] (2) to (3);
\draw [->, thick] (3) to (1);
\draw [->, thick] (1) to (4);
\draw [->, thick] (2) to (4);
\draw [->, thick] (3) to (4);
\draw [->, thick] (1) to (5);
\draw [->, thick] (2) to (5);
\draw [->, thick] (3) to (5);
\draw [->, thick, red] (4) to (5);

\end{tikzpicture}
\end{minipage}
\begin{minipage}[t]{0.4\textwidth}
\centering
\begin{tikzpicture}
[main_node/.style={circle,fill=white,draw,minimum size=1 em,inner sep=2pt]}]

\node[main_node] (4) at (0, 0) {$4$};
\node[main_node] (5) at (5/2, 0) {$5$};
\node[main_node] (3) at (6.54/2, 4.75/2) {$3$};
\node[main_node] (2) at (2.50/2, 7.69/2) {$2$};
\node[main_node] (1) at (-1.54/2, 4.75/2) {$1$};

\draw [->, thick] (1) to (2);
\draw [->, thick] (2) to (3);
\draw [->, thick] (3) to (1);
\draw [->, thick] (1) to (4);
\draw [->, thick] (2) to (4);
\draw [->, thick] (3) to (4);
\draw [->, thick] (1) to (5);
\draw [->, thick] (2) to (5);
\draw [->, thick] (3) to (5);
\draw [->, thick, red] (5) to (4);

\end{tikzpicture}
\end{minipage}
\caption{Two complete digraphs with the same Jacobian matroid.} 
\label{fig5}
\end{figure}

This conjecture means that while many complete graphs might be distinguishable from each other, this matroid-based technique is unable to prove this. This is similar to Conjecture 4.8 of \cite{Seth19M} where the authors found a pair of phylogenetic models which they conjecture have the same matroid despite the statistical models (or their associated algebraic varieties) actually being different. In general, this can happen because two different algebraic varieties can define the exact same algebraic matroid. We also have verified this conjecture for all complete digraphs with up to 6 nodes using \texttt{Macaulay2}. The code for this along with an explanation of how all of our code works can be found in our MathRepo page \cite{mathrepo}. 

\bibliography{references.bib}{}
\bibliographystyle{plain}
\end{document}